\def\csname ver@subfig.sty\endcsname{}
\numberwithin{equation}{section}
\newtheorem{theorem}{Theorem}
\newtheorem{definition}{Definition}
\newtheorem{lemma}{Lemma}
\newtheorem{remark}{Remark}
\renewenvironment{proof}{{\noindent \bfseries Proof:}}{\qed\newline}
\newcommand{\Kryl}[3]{\mathcal{K}_{#1} (#2,#3)}
\newcommand{\BKryl}[3]{\mathcal{K}^\square_{#1} (#2,#3)}
\newcommand{\diag}{\mathrm{diag}}
\newcommand{\tridiag}{\mathrm{tridiag}}
\newcommand{\Diag}{\mathrm{Diag}}
\newcommand{\sign}{\mathrm{sign}}
\newcommand{\NN}{\mathbb{N}}
\newcommand{\RR}{\mathbb{R}}
\newcommand{\OO}{\mathrm{O}}
\newcommand{\dx}{\Delta\!{X}}
\newcommand{\dv}{\Delta\!{V}}
\newcommand{\dl}{\Delta\!\Lambda}
\newcommand{\rank}{\textnormal{rank}}
\newcommand{\nullspace}{\mathcal{N}}
\newcommand{\vect}{\mathop{\mathrm{vec}}}
\newcommand{\trace}{\mathop{\mathrm{tr}}}
\newcommand{\define}{\mathop{\coloneqq}}
\newcommand{\argmin}{\operatornamewithlimits{argmin}}
\newcommand{\multcom}[1]{}
\definecolor{tured}{RGB}{190,30,60}
\definecolor{tuyellow}{RGB}{255,200,42}
\definecolor{tuorange}{RGB}{225,109,0}
\definecolor{tudarkred}{RGB}{113,28,47}
\definecolor{tugreen}{RGB}{109,131,0}
\definecolor{tulightgreen}{RGB}{172,193,58}
\definecolor{tudarkgreen}{RGB}{0,83,74}
\definecolor{tulightblue}{RGB}{102,180,211}
\definecolor{tublue}{RGB}{0,112,155}
\definecolor{tudarkblue}{RGB}{0,63,87}
\definecolor{tulightpurple}{RGB}{138,48,127}
\definecolor{tupurple}{RGB}{81,18,70}
\definecolor{tudarkpurple}{RGB}{76,24,48}
\definecolor{tublack}{RGB}{0,0,0}
\title{An inexact Matrix-Newton method for solving NEPv}
\author{Tom Werner}
\begin{document}
\affil{Institute for Numerical Analysis, TU Braunschweig\\Universit\"atsplatz 2, 38106 Braunschweig, Germany\\\href{mailto:tom.werner@tu-braunschweig.de}{tom.werner@tu-braunschweig.de}}
\maketitle
\begin{abstract}
In this paper, an inexact Newton method for solving real-valued nonlinear eigenvalue problems with eigenvector dependency (NEPv) is introduced that is able to solve the problem on a matrix level. Our main contribution is to derive a variant of Newton's method that uses global Krylov methods such as global GMRES to solve the linear operator equation necessary to compute the Newton correction in a matrix-free way. The advantages that this second order method has over the well-established SCF algorithm are explained and visualized by a variety of numerical experiments.
\end{abstract}

\section{Introduction}
In this paper, we consider real-valued eigenvector-dependent nonlinear eigenvalue problems (NEPv) to be formulated in the following form: given a continuous, symmetric matrix-valued function $H:\RR^{n,k}\rightarrow\RR^{n,n}$ (i.e. $H(V)=H(V)^T$),
\begin{equation}
\text{find } V\in\OO_{n,k}(\RR) \text{ and }\Lambda=\Lambda^T\in\RR^{k,k} \text{ such that }H(V)V=V\Lambda, \label{eq:NEPv}
\end{equation}
where $\OO_{n,k}(\RR)$ denotes the set of $n \times k$ real matrices with orthonormal columns, i.e., the $k$-dimensional Stiefel-manifold over $\RR^n$, and $X^T$ denotes the transpose of $X\in\RR^{n,k}$. NEPvs arise in a variety of applications, most notably in computational physics, such as discretized Kohn-Sham- or Gross-Pitaevskii-equations \cite{gpref1,gpref2,KSref2,KSref3}, as well as in data analysis applications for the trace ratio or robust Rayleigh quotient optimization problems used in linear discriminant analysis \cite{rlda,rldaref1,trmref1,trmref2}. In most of those cases, the eigenvalue problem is closely related to a minimization problem with orthogonality constraints and appears as an equivalent
formulation to the original problem when analyzing its first order optimality conditions (KKT-conditions) \cite{rlda,nepv,KSref1}.\\
Throughout this paper, we will assume that $H$ satisfies the right orthogonal invariance condition
$$ H(V)\equiv H(VQ),\;\text{for any}\; Q\in\OO_k(\RR),$$
meaning that every solution $(V,\Lambda)$ of \eqref{eq:NEPv} corresponds to a whole subspace of solutions 
$$\mathcal{S}_{(V,\Lambda)}=\lbrace (W,\Theta)~\vert~\exists Q\in\OO_{k}(\RR): W=VQ~\land~\Theta=Q^T\Lambda Q\rbrace.$$
We say that $(V,\Lambda)$ is a unique solution to \eqref{eq:NEPv}, if every $(\widetilde{V},\widetilde{\Lambda})$ solving \eqref{eq:NEPv} satisfies $\mathcal{R}(V)=\mathcal{R}(\widetilde{V})$, or in other words, $\mathcal{S}_{(V,\Lambda)}=\mathcal{S}_{(\widetilde{V},\widetilde{\Lambda})}$. Here, $\mathcal{R}(V)$ denotes the range of $V$, i.e. the subspace spanned by the columns of $V$.\\
Certainly, whenever $(V, \Lambda)$ satisfies \eqref{eq:NEPv}, the eigenvalues of $\Lambda$ are eigenvalues of $H(V)$. In most practical applications, one is therefore interested in finding $(V,\Lambda)$ satisfying \eqref{eq:NEPv} such that the eigenvalues of $\Lambda$ become as large/small as possible, depending on the extremal values of the corresponding optimization problem. The solutuion that allows for the easiest interpretability is the one that leads to $\Lambda$ being diagonal, since in this case, the diagonal entries $\lambda_i$ are the desired eigenvalues of $H$ with the corresponding eigenvectors $v_i$ forming he columns of $V$.\\
One can show that \eqref{eq:NEPv} has a unique solution, if there is a uniform gap between the $k$-th and $(k+1)$-st eigenvalue of $H(V)$, for any $V\in\OO_{n,k}(\RR)$, and $H$ satisfies a particular type of Lipschitz-condition \cite[Thm. 1]{nepv}.\\
Sometimes, \eqref{eq:NEPv} is modified by an additional symmetric matrix function $G:\RR^{n,k}\rightarrow\RR^{n,n}$, leading to what we will call the generalized NEPv (GNEPv): %\tag{GNEPv}
\begin{equation}
\text{Find } V\in\OO_{n,k}(\RR) \text{ and }\Lambda=\Lambda^T\in\RR^{k,k} \text{ such that } 	H(V)V=G(V)V\Lambda.  \label{eq:GNEPv}
\end{equation}
For simplicity, we will focus on \eqref{eq:NEPv} for the development of our algorithm and point out the necessary modifications to be able to handle \eqref{eq:GNEPv} when needed.\\   
The most widely used method to solve NEPvs is the self-consistent-field (SCF) iteration originating from electronic structure calculations, where it is commonly used to solve Hartree-Fock or Kohn-Sham equations \cite{KSref2,KSref3,KSref4}. In this simple iterative scheme, a new iterate $V_{j+1}$ is generated by computing the desired eigenpairs of $H_j=H(V_j)$ until the iteration becomes stationary. The plain algorithm is summarized in \autoref{alg:plainscf}.\\
\begin{algorithm}[H]
	\SetAlgoVlined
\KwIn{$V_0\in\OO_{n\times k}(\RR)$, $H:\RR^{n,k}\rightarrow\RR^{n,n}$}
\KwOut{$(V^*,\Lambda^*)\in\RR^{n,k}\times\RR^{k,k}$ approximate solution to $H(V)V=V\Lambda$}
Set $j=0$\\
\While{\textsc{not converged}}{
Evaluate $H_j=H(V_j)$\\
Compute $(V_{j+1},\Lambda_{j+1})$ as the $k$ largest/smallest eigenpairs of $H_j$\\
Set $j=j+1$\\
}
Return $(V^*,\Lambda^*)=(V_j,\Lambda_j)$
\vspace{-.2cm} 
	\caption{Plain SCF}
	\label[algorithm]{alg:plainscf}
\end{algorithm}
Another approach to solve \eqref{eq:NEPv}, first considered by Gao, Yang and Meza \cite{newtonvec}, is the use of Newton's method. They rewrote the eigenvalue problem \eqref{eq:NEPv} as a root finding problem $F(X)=0$ with
\begin{equation}
	F:\RR^{n,k}\times\RR^{k,k}\rightarrow\RR^{n+k,k},\text{ } F(V,\Lambda)=\begin{bmatrix} H(V)V-V\Lambda\\ V^T V -I_k
	\end{bmatrix} \label{eq:NEPvroot1}
\end{equation}
which was considered in its vectorized form $f(x) = 0$ as
\begin{equation*}
	f:\RR^{n\cdot k}\times \RR^{k\cdot k}\rightarrow\RR^{(n+k)\cdot k},\text{ } f(\vect(V),\vect(\Lambda))=\begin{bmatrix}\vect(H(V)V-V\Lambda)\\\vect(V^TV -I_k)\end{bmatrix}
\end{equation*}
and solved using a multivariate Newton approach. Here, we denote by $\vect(\cdot)$ the operator that stacks the columns of a matrix vertically.\\
The algorithm was discussed for a discretized Kohn-Sham-model for which the Jacobian $J_f(x_j)$ was explicitly available and could be shown to outperform the SCF-approach for relatively small dimensions when exploiting the structure of the problem. In this paper, we want to follow a similar route by using a Newton approach for the matrix valued problem \eqref{eq:NEPvroot}. The novel contribution of our work is to introduce a framework for efficiently finding solutions of nonlinear matrix equations, particularly \eqref{eq:NEPvroot}, by an extension of Jacobian-free Newton Krylov methods to the matrix setting. In this setting, we suggest to use global Krylov methods, in our case global GMRES, to obtain the update direction in the Newton correction equation without the need of vectorization or the explicit computation of the Jacobian.\\  
The following chapters are organized as follows: in Chapter 2, we derive the general framework for solving matrix valued root finding problems by matrix-free Newton methods. For this sake, we introduce an inexact Newton method using a global GMRES approach for approximately solving the Newton update equation. In Chapter 3, we demonstrate how to apply the inexact Newton algorithm to \eqref{eq:NEPv} or \eqref{eq:GNEPv} efficiently. We also provide a convergence analysis for our algorithm on the NEPv problem. In this, we can prove locally quadratic convergence of our method under suitable assumptions (\autoref{thm:convergence}). In Chapter 4, we apply the newly derived algorithm to several test problems from different applications and compare its convergence speed to the SCF algorithm. Additionally, we briefly comment on using global LSQR as inner solver and compare its convergence to global GMRES. We finish off with a discussion of our results and a summary of our contribution in Chapter 5.    
\section{Newton's Method for nonlinear matrix equations}
In this section, we consider the problem of finding a root $X^*\in\RR^{n,k}$ of a general, smooth matrix function $F:\RR^{n,k}\rightarrow\RR^{n,k}$. In the simple case where $k=1$, this now vector-valued problem can be solved by the well-known multivariate Newton method \cite{atkinson}, which requires the update $$x_{j+1}=x_j - J_F(x_j)^{-1} F(x_j),$$ where $J_F(x_j)\in\RR^{n,n}$ is the Jacobian of $F$ at $x_j\in\RR^n$. Usually, the explicit inversion of the Jacobian is avoided by rewriting the update formula in the form
\begin{equation}
  J_F(x_j)\Delta{x}_j=-F(x_j),\quad x_{j+1} = x_j+\Delta{x}_j, \label{eq:newtonvec}
\end{equation}
where a linear system with the Jacobian is solved to obtain the update direction $\Delta{x}_j$. A similar approach to \eqref{eq:newtonvec} is possible for matrix functions ($k>1$) using the Fréchet derivative of $F$ instead of the Jacobian to perform the update
\begin{equation}
  L_F(X_j,\dx_j)=-F(X_j),\quad X_{j+1}=X_j+\dx_j, \label{eq:newtonfrechet}
\end{equation}
where $L_F(X_j,\dx_j)$ is the Fréchet derivative of $F$ at $X_j$ in direction of $\dx_j$.
\begin{definition}[Fréchet derivative and Kronecker form] $ $\\
The \textbf{Fréchet derivative} of a matrix function $F:\RR^{n,k}\rightarrow\RR^{n,k}$ at a point $X$, if it exists, is the unique linear mapping
$$L_F(X):\RR^{n,k}\rightarrow\RR^{n,k},\quad \dx\mapsto L_F(X)(\dx)\define L_F(X,\dx),$$
such that for every $\dx\in\RR^{n,k}$ it holds
$$ F(X+\dx)-F(X)-L_F(X,\dx)=\mathop{o}(\lVert \dx \rVert).$$
Since $L_F(X)$ is a linear operator, there exists $K_F(X)\in\RR^{n\cdot k,n\cdot k}$ not depending on $\dx$ such that
$$ \vect(L_F(X,\dx))=K_F(X) \vect(\dx).$$
We refer to $K_F(X)$ as the \textbf{Kronecker form} of the Fréchet derivative. We say that $L_F(X)$ is \textbf{nonsingular} at a point $X$, if its matrix representation $K_F(X)$ is nonsingular. 
\end{definition}
It is straightforward to show that the classic rules for differentiating composed functions apply to the Fréchet derivative in the same way, most notably the product and chain rule \cite[Sec. 3.2]{matrixfunctions}.\\
As the Fréchet derivative is linear in $\dx$, we can see immediately that solving a linear matrix equation is required to find the update direction $\dx_j$ in \eqref{eq:newtonfrechet}. Certainly, similar to the scalar and vector-valued case, Newton's method in the form \eqref{eq:newtonfrechet} can be shown to converge quadratically in a neighbourhood around a root $X^*\in\RR^{n,k}$ of $F$ as long as $F$ is Fréchet differentiable and the update equation $L_F(X_j,\dx_j)=-F(X_j)$ has a unique solution for every $X_j$ \cite{newtonbanach}. However, there is also some research towards the convergence of Newton's method if the Jacobian (or the Fr\'echet derivative) is analytically or numerically rank deficient and the update equation has only unique solutions in the least-squares sense or non-isolated zeros exist \cite{newtonpseudoinverse,newtonconstrank,newtoncriterion,nonisolated}.\\
Since solving the linear equation \eqref{eq:newtonfrechet} directly for $\dx_j$ is usually challenging due to the Fréchet derivative being complicated, we have to settle for approximate solutions to the update equation which we can compute by iterative methods, leading to a class of algorithms called inexact Newton methods (INM) which we will briefly introduce in the following section.
\subsection{Inexact Newton methods}\label{sec:imn}
In the inexact Newton framework, we seek to find an update direction $\dx_j$ by finding a matrix satisfying the residual inequality
\begin{equation}
\lVert R_j(\dx_j)\rVert_F \leq \eta_j \lVert F(X_j)\rVert_F,\quad \eta_j>0, \label{eq:inexactupdate}
\end{equation}
where $R_j(\dx_j)\define L_F(X_j,\dx_j) +F(X_j)$ is the residual of the correction equation \eqref{eq:newtonfrechet} and $\eta_j$ is the so-called \emph{forcing term} affecting the relative accuracy at which the equation is solved \cite{forcingterm3,forcingterm}. Since $L_F(X)$ is linear, a solution $\dx_j$ of \eqref{eq:inexactupdate} can be computed by employing iterative solvers for linear equations, in particular Krylov subspace methods. Note that in the Krylov subspace setting, we usually do not need to know the exact derivative of $F$ to be able to extend the subspace, we only need to be able to apply the derivative in a given direction, which can be done by finite difference approximations. In the case of vector-valued functions, approaches combining inexact Newton methods with Krylov subspace methods as (matrix-free) inner solver are referred to as \emph{(Jacobian-free) Newton-Krylov methods} (JFNK) \cite{jfnk}.\\
To get more insight on how to apply JFNK to matrix-valued problems, consider again the Newton update equation \eqref{eq:newtonfrechet} and its vectorized companion
\begin{equation}
K_F(X_j) \vect(\dx_j) = - \vect(F(X_j)) \label{eq:newtonkron}
\end{equation}
using the Kronecker form of the Fréchet derivative. In theory, $\dx_j$ can be computed by employing GMRES for the vectorized equation \eqref{eq:newtonkron}, where we have to compute the matrix-vector products $K_F(X_j)\cdot v_\ell$ to expand the Arnoldi sequence. However, computing $K_F(X_j)$ explicitly is not appropriate in this application, since the computation itself is expensive and the resulting problem is way bigger than the original matrix-valued problem.\\
Instead, we want to view \eqref{eq:newtonfrechet} as an equation with multiple right-hand sides  
$$\mathcal{A}(Y)=B, \quad Y,B\in\RR^{n,k},\;\mathcal{A}:\RR^{n,k}\rightarrow\RR^{n,k}$$  
for the linear operator $\mathcal{A}$ and follow a global GMRES approach \cite{globalgmres}. In the global GMRES-algorithm, we seek to find an approximate solution to $\mathcal{A}(Y)=B$ by spanning the block Krylov subspace
$$\BKryl{\ell}{\mathcal{A}}{B}=\textnormal{\textbf{blockspan}}\lbrace B,\mathcal{A}(B),\mathcal{A}^2(B),\dots,\mathcal{A}^{\ell-1}(B)\rbrace,$$
where $\mathcal{A}^s(B)=\mathcal{A}(\mathcal{A}^{s-1}(B))$ and $\mathcal{A}^0(B)=B$, with a series of matrices $\lbrace V_1,V_2,\dots,V_\ell\rbrace$ generated from a global Arnoldi-process that are orthogonal to each other with respect to the Frobenius inner product $\langle X,Y\rangle _F= \trace(X^TY)$ and normalized, i.e. $\langle V_i,V_j\rangle_F=\delta_{i,j}$, and satisfy the relation  
$$\mathcal{A}\mathcal{V}_\ell = \mathcal{V}_{\ell+1}(\underline{H}_\ell \otimes I_k),$$
where $\mathcal{V}_\ell=\left[V_1,V_2,\dots,V_\ell\right]\in\RR^{n,k\cdot\ell}$ and $\underline{H}_\ell\in\RR^{\ell+1,\ell}$ is upper Hessenberg with an additional row and entries $h_{m,i}=\langle V_m,V_i\rangle_F$, $i=1,\dots,\ell$, $i\leq m+1$. Given some initial $X_0\in\RR^{n,k}$, global GMRES attempts to find the solution $X_\ell$ that minimizes the residual
$$ \lVert R_\ell\rVert_F:=\lVert B-\mathcal{A}(X_\ell)\rVert_F=\min_{Z\in\BKryl{\ell}{\mathcal{A}}{R_0}}\lVert B - \mathcal{A}(X_0+Z))\rVert_F=\min_{c\in\RR^\ell}\left\lVert \underline{H}_\ell c -  \lVert R_0\rVert_F e_1 \right\rVert_2$$ 
over the Krylov subspace $\BKryl{\ell}{\mathcal{A}}{R_0}$. The following Lemma is a direct consequence of the relation between GMRES and global GMRES and justifies using a global GMRES approach for solving the update equation \eqref{eq:newtonfrechet}:
\begin{lemma}\label{lem:gmres}
Let $F:\RR^{n,k}\rightarrow\RR^{n,k}$ be Fréchet differentiable at $X\in\RR^{n,k}$ with Fréchet derivative $L_F(X):\RR^{n,k}\rightarrow\RR^{n,k}$ and Kronecker form $K_F(X)\in\RR^{n\cdot k,n\cdot k}$. Then, for any $B\in\RR^{n,k}$, if neither iteration breaks down, the approximate solution $Y_\ell\in\RR^{n,k}$ to the matrix equation
$$L_F(X,Y)=B$$
after $\ell$ steps of the global GMRES method and the approximate solution $y_\ell\in\RR^{n\cdot k}$ to the linear system
$$K_F(X)y=\vect(B)$$
after $\ell$ steps of the GMRES method satisfy $y_\ell=\vect(Y_\ell)$, given that $y_0=\vect(Y_0)$.
\end{lemma}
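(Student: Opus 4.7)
The cleanest strategy is to exploit the fact that $\vect$ is a linear isometry from $(\CC^{n,k},\langle\cdot,\cdot\rangle_F)$ onto $(\CC^{n\cdot k},\langle\cdot,\cdot\rangle_2)$, together with the defining identity $\vect(L_F(X,E))=K_F(X)\vect(E)$. Once these two facts are in hand, every ingredient of global GMRES (Krylov subspace, Arnoldi basis, least-squares problem) corresponds one-to-one with the analogous ingredient of standard GMRES after vectorization, and the claim reduces to matching bookkeeping.

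First I would set $R_0\define B-L_F(X,Y_0)$ and observe that $\vect(R_0)=\vect(B)-K_F(X)\vect(Y_0)=\vect(B)-K_F(X)y_0$ is exactly the initial residual of the vectorized problem. Induction on $j$, applying $\vect\circ L_F(X,\cdot)=K_F(X)\circ\vect$ repeatedly, then yields $\vect\!\left(L_F(X,\cdot)^j(R_0)\right)=K_F(X)^j\vect(R_0)$, so that
$$\vect\bigl(\Kryl{m}{L_F(X)}{R_0}\bigr)=\Kryl{m}{K_F(X)}{\vect(R_0)}.$$
Next I would compare the two Arnoldi processes. Both normalize the starting element and then repeatedly orthogonalize (via modified Gram--Schmidt) the $L_F(X,\cdot)$- or $K_F(X)$-image of the most recent basis element against all previous basis elements. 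Because $\vect$ preserves inner products, the scalars $h_{i,j}=\langle V_i,W\rangle_F$ generated at each step of the global process agree entry-by-entry with those generated by the standard process once the starting vectors are identified via $\vect$. By induction this gives $\vect(V_i)=v_i$ for all $i\le \ell+1$ and a common upper Hessenberg matrix $\underline H_\ell$; the global Arnoldi relation $A\mathcal V_\ell=\mathcal V_{\ell+1}(\underline H_\ell\otimes I_k)$ is then merely the un-vectorized companion of $K_F(X)[v_1,\dots,v_\ell]=[v_1,\dots,v_{\ell+1}]\underline H_\ell$.

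Finally, both methods pick the iterate in the affine Krylov subspace that minimizes the residual norm. Writing $Y_m=Y_0+\sum_{i=1}^m\alpha_i V_i$, the global GMRES coefficients $\alpha$ solve
$$\min_{\alpha\in\CC^{m}}\Bigl\lVert R_0-\sum_{i=1}^m\alpha_i\, L_F(X,V_i)\Bigr\rVert_F,$$
which by the isometry and the shared $\underline H_\ell$ is literally the same small least-squares problem that standard GMRES solves for $y_m=y_0+\sum_i\alpha_i v_i$. Hence the two coefficient vectors coincide, and $y_m=\vect(Y_m)$ follows.

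I do not expect any deep obstacle; the argument is essentially tracking an isometry through the algorithm. The one point that requires genuine care is verifying that the two Arnoldi variants produce \emph{identical} Hessenberg entries, not merely bases of the same Krylov subspace; the cleanest way to secure this is to phrase the inductive step abstractly in the isometry framework rather than writing out the Gram--Schmidt recurrences component by component.
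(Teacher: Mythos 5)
Your proposal is correct and follows essentially the same route as the paper's proof: an induction showing $v_i=\vect(V_i)$ via the identity $\vect(L_F(X,E))=K_F(X)\vect(E)$ and the inner-product preservation $\langle \vect(W),\vect(V_i)\rangle_2=\langle W,V_i\rangle_F$, which forces identical Hessenberg matrices and hence identical least-squares solutions. No gaps.
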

\begin{proof}
Let $Y_0\in\RR^{n,k}$ be the initial guess for the global GMRES algorithm (GL-GMRES) and $y_0=\vect(Y_0)\in\RR^{n\cdot k}$ be the corresponding initial guess for the GMRES algorithm. Using the definition of the Kronecker form and the fact that $\langle V_i,V_j\rangle_F=\langle \vect(V_i),\vect(V_j)\rangle_2$, for matrices $V_i,V_j\in\RR^{n,k}$, one can easily verify by induction that the basis matrices $V_i$ of the Block-Krylov subspace $\BKryl{\ell}{L_F(X)}{R_0}$ and the basis vectors $v_i$ of the standard Krylov subspace $\Kryl{\ell}{K_F(X)}{r_0}$ satisfy $v_i=\vect(V_i)$, $i=1,\dots,\ell$, where $R_0=B-L_F(X,Y_0)$ and $r_0=\vect(B)-K_F(X)y_0$. Therefore, we also have the equivalence of the Hessenberg matrices $\underline{H}^{\scalebox{.5}{GMRES}}_\ell$ and $\underline{H}^{\scalebox{.5}{GL-GMRES}}_\ell$ and the two minimization problems
$$ \min_{c\in\RR^\ell}\left\lVert \underline{H}^{\scalebox{.5}{GL-GMRES}}_\ell c - \lVert R_0 \rVert_F e_1\right\rVert_2  \quad\text{and}\quad \min_{c\in\RR^\ell}\left\lVert \underline{H}^{\scalebox{.5}{GMRES}}_\ell c - \lVert r_0 \rVert_2 e_1\right\rVert_2$$
have the same solution $c^*=(c_1,\dots,c_\ell)^T\in\RR^\ell$. It immediately follows that 
$$y_\ell = y_0 + \sum_{i=1}^\ell c_i v_i = \vect(Y_0) + \sum_{i=1}^\ell c_i \vect(V_i)=\vect(Y_0 + \sum_{i=1}^\ell c_iV_i)=\vect(Y_\ell).$$ 
and both algorithms are mathematically equivalent.
\end{proof}
From \cref{lem:gmres}, we know that solving \eqref{eq:newtonfrechet} by global GMRES is equivalent to solving \eqref{eq:newtonkron} by GMRES. However, the global GMRES algorithm avoids the computation of the Kronecker form $K_F(X_j)$ and is able to operate entirely on the original matrix space $\RR^{n,k}$ using level three BLAS-routines for the application of $L_F(X_j)$ and the Gram-Schmidt-process, thus avoiding large scale matrix-vector multiplications and potentially allowing the usage of structure inherent to the problem, such as low-rank or sparsity structure. The global GMRES algorithm for \eqref{eq:newtonfrechet} is summarized in \cref{alg:glgmres}.\\
Note that the minimization problem with the upper Hessenberg matrix $\underline{H}_\ell\in\RR^{\ell+1,\ell}$ in step 9 can be solved by finding $c$ satisfying the linear system $Rc=\xi$, where $\underline{H}_\ell=QR$ is a QR-decomposition of $\underline{H}_\ell$ and $\xi=Q^T \lVert R_0\rVert_F e_1$. In most available software packages, this is done by transforming $\underline{H}_\ell$ to upper triangular form using Givens rotations for every newly computed column of $\underline{H}_\ell$. This approach can be used in \cref{alg:glgmres} as well to avoid unnecessarily large Krylov spaces. Certainly, the algorithm can be easily modified to apply preconditioning to the system.\\
Additionaly, we want to mention again that it is not necessary to explicitly know the Fréchet derivative of $F$ at $X$ to be able to expand the Krylov subspace. If $L_F(X)$ can not be computed explicitly, it usually suffices to approximate $L_F(X,V_i)$ by, e.g. finite differences or, if accurate approximations are needed, the complex step\cite{complexstep}.\\
\begin{algorithm}[H]
		\SetAlgoVlined
	\KwIn{$X\in\RR^{n,k}$, $\dx_0\in\RR^{n,k}$ initial guess, $\eta<1$, $\ell\in\mathbb{N}$}
	\KwOut{$\dx\in\RR^{n,k}$ approximate Newton update direction}
	Compute $B=-F(X)$ and $R_0=B-L_F(X,\dx_0)$\\
	Define $V_1=\frac{R_0}{\lVert R_0\rVert_F}$ \\
	\For{$i=1,2,\dots,\ell$}{
		Compute $W=L_F(X,V_i)$ \\
		\For{$m=1,\dots,i$}{
			$ h_{m,i} = \langle V_m,W\rangle _F$, \quad $W = W - h_{m,i} V_m$
		}
		Expand $h_{i+1,i} = \lVert W\rVert _F$, \quad $V_{i+1}=\frac{W}{h_{i+1,i}}$  
	}
	Solve the least-squares problem $\min_{c\in\RR^\ell} \left\lVert \underline{H}_\ell c - \lVert R_0\rVert_F e_1\right\rVert _2$ to obtain $c=(c_1,\dots,c_\ell)^T$ \label{step:lssolve}\\
	Construct the approximate solution $\dx=\dx_0+\sum_{i=1}^\ell c_i V_i$\\
	\eIf{$\lVert B-L_F(X,\dx)\rVert_F\leq \eta \lVert B\rVert _F$}{
		\Return $\dx$
	}{
	Set $\dx_0=\dx$ and restart the algorithm
	}
\vspace{-.2cm} 
	\caption{Global GMRES for \eqref{eq:inexactupdate}}
	\label[algorithm]{alg:glgmres}
\end{algorithm}
In the following chapter, we will give a detailed description on how to use this approach for solving NEPvs efficiently without giving up the original matrix equation structure. 
\section{The inexact Matrix-Newton method for NEPv}
Consider problem \eqref{eq:NEPv} once again in the original matrix root finding form \eqref{eq:NEPvroot1}. For symmetry reasons, we change the sign of the orthogonality constraint to obtain the nonlinear function
\begin{equation}
	F:\RR^{n+k,k}\rightarrow\RR^{n+k,k},\text{ } F(X)\define F\left(\begin{bmatrix} V\\ \Lambda \end{bmatrix}\right)=\begin{bmatrix} H(V)V-V\Lambda\\ I_k-V^TV
	\end{bmatrix}.	 \label{eq:NEPvroot}
\end{equation}
To derive an efficient algorithm for solving \eqref{eq:NEPv} in its original form, we want to adapt the techniques introduced in \cref{sec:imn} to the particular matrix function from \eqref{eq:NEPvroot}. For simplicity, we will focus on \eqref{eq:NEPv} here but it is straightforward to extend the ideas to \eqref{eq:GNEPv}.\\
For Newton's method to be applicable, we require $F$ to be Fréchet differentiable on $\RR^{n+k,k}$. Let $\dx=\begin{bmatrix} \dv\\ \dl \end{bmatrix}\in\RR^{n+k,k}$ be an arbitrary direction matrix and let $F_1(X):=H(V)V-V\Lambda$ and $F_2(V):=I_k-V^TV$ be the two parts of $F(X)$. Then, assuming that $H$ is Fr\'echet differentiable at $V$, we have by the product rule
\begin{equation}
	L_{F_1}(X,\dx) = H(V) \dv + L_H(V,\dv)V - (V \dl + \dv \Lambda). \label{eq:frechetnepv}
\end{equation}
In consequence, $F_1(X)$ is Fréchet differentiable whenever $H(V)$ is Fréchet differentiable. One can immediately extend this idea to the GNEPv \eqref{eq:GNEPv} if both $G(V)$ and $H(V)$ are Fréchet differentiable. For $F_2(V)$, it is easy to see that 
\begin{equation}
  L_{F_2}(V,\dv) = -(V^T \dv + \dv^T V) \label{eq:frechetorthogonal}
\end{equation}
and $F_2$ is always Fréchet differentiable. Thus, smoothness of $F(X)$ directly depends on the smoothness of $H(V)$ and $G(V)$.
\begin{remark}
We want to note that for complex problems, the orthogonality constraint $I-V^HV$ is not Fr\'echet differentiable anymore. Intuitively, the approach can still be applied in a similar way since Newton's method to $F_2(V)$ alone leads to the well-known Newton-Polar-iteration \cite{polar} which converges quadratically, even if $V$ is complex. Numerical experiments also provide strong evidence that the approach can be applied to complex problems with minor modifications, however, providing the theoretical background for that is still a point of research. 
\end{remark}
\subsection{Pre and Postprocessing by SCF}
As mentioned earlier, the most commonly used method when solving \eqref{eq:NEPv} is the SCF algorithm, originating from Hartree-Fock and Kohn-Sham density functional theory. The plain algorithm (\cref{alg:plainscf}) is easy to implement using an arbitrary algorithm for computing a limited number of eigenpairs of the matrix $H(V_j)$, which is available in almost every current programming language. Additionally, the algorithm can be shown to be globally convergent if there exists a uniform gap between the $k$-th and $(k+1)$-st eigenvalue of every $H(V_j)$, where $\lbrace V_j\rbrace_j$ is the sequence of SCF iterates, and $H$ satisfies the Lipschitz-like condition 
$$ \lVert H(V)- H(\widetilde{V})\rVert_2\leq \xi \lVert \sin(\Theta(V,\widetilde{V}))\rVert_2,\quad \forall~ V,\widetilde{V}\in\OO_{n,k}(\RR),$$
where $\Theta(V,\widetilde{V})$ is a diagonal matrix containing the canonical angles between $V$ and $\widetilde{V}$ \cite[Thm. 3]{nepv}. However, the main drawback of the SCF algorithm is that this convergence is generally of linear order only.\\
Note that several techniques for improving convergence speed and stability of the SCF iteration have been developed. These techniques include level-shifting methods to increase the gap between the $k$-th and $(k+1)$-st eigenvalue \cite{optimalscf,levelshift}, damping and trust-region strategies \cite{dampscf,KSref4,KSref1} and sequential subspace searching \cite{convexmin}. However, those are not considered in this paper as they usually require problem specific adaptations to make them worth while and we want to keep our methods as simple and universally applicable as possible.\\
For Newton's method, Kantorovich's theorem guarantees locally quadratic order of convergence of the sequence $\lbrace X_j\rbrace_j$, given that the update equation can be solved exactly \cite{kantorovich}. For inexact Newton methods, we can usually expect superlinear convergence close to the desired solution \cite{inexactnewton}.  \\
In our approach, we combine the benefits of both algorithms: we begin by applying the SCF iteration to an arbitrary initial guess (with orthonormal columns) and switch to Newton's method once we are "close enough" to a solution. This way, we can let SCF find the desired subspace and exploit the faster order of convergence of Newton's method once we are on the trajectory of convergence. After we found a solution by Newton's method, we suggest to perform an additional SCF step with this solution. This postprocessing step guarantees the matrix $\Lambda$ to be diagonal as opposed to just being symmetric to allow for better interpretability and comparability to the SCF solution. Additionally, if the eigenvalues before and after postprocessing are equal, the SCF algorithm accepts the solution as a global optimizer.
\subsection{Local convergence analysis}
One drawback of Newton's method for \eqref{eq:NEPvroot} which was already pointed out in \cite{newtonvec} is that, due to the constraint $V^TV=I_k$ and the orthogonal invariance, the derivative will always be singular at the solution. To be more precise, the derivative will be of rank $(n+k)k - \frac{k(k-1)}{2}=nk + \frac{k(k+1)}{2}$. However, the rank of the derivative matches the total amount of unknowns in the equation since the strict triangle of $\Lambda$ is directly determined by its symmetry, leading to $\sum_{i=1}^{k-1}i=\frac{k(k-1)}{2}$ redundant variables.\\

It has been pointed out first by Ben-Israel in 1966 \cite{newtonpseudoinverse} that Newton's method for functions with singular Jacobians can still be carried out by replacing the inverse of the Jacobian by its pseudoinverse to get the iteration
\begin{equation}
	x_{j+1}= x_j - J_f(x_j)^+f(x_j). \label{eq:newtonpinv}
\end{equation}
However, the convergence theory in this work requires the limit point to be an isolated zero of $f$. Lately, there has been extensive research to formulate more practical convergence criteria for Newton's method in the form \eqref{eq:newtonpinv} under the additional assumption that $J_f$ has locally constant rank \cite{newtonconstrank,newtoncriterion} and proof local quadratic convergence of this iteration, even for non-isolated zeros \cite{nonisolated}. To characterize the convergence of \eqref{eq:newtonpinv}, we need the definition of a semi-regular zero:
\begin{definition}[Dimension of a zero and semi-regularity (\cite{nonisolated})]
Let $x^*$ be a zero of a smooth mapping $f:\Omega\subset\RR^m\rightarrow\RR^n$, i.e., $f(x^*)=0$.
\begin{enumerate}
\item If there is an open neighborhood $\mathcal{B}\subset\Omega$ of $x^*$ in $\RR^m$ such that $\mathcal{B}\cap f^{-1}(0) = \phi(\Gamma)$, where $z\mapsto\phi(z)$ is a differentiable injective mapping defined in a connected open set $\Gamma\subset\RR^p$, for a certain $p>0$, which satisfies $\phi(z^*)=x^*$ and $\rank(J_\phi(z^*))=p$, then the \textbf{dimension of $x^*$ as a zero of $f$} is defined as 
$$\dim_f(x^*) :=\rank(J_\phi(z^*)) = p$$ 
As a special case, an isolated zero has dimension zero by definition.
\item Let $\Gamma^*\subset\Gamma$ be an open neighborhood around $z^*$ such that $\rank(J_\phi(\hat{z}))=p$, for every $\hat{z}\in\Gamma^*$. We then say that every $\hat{x}\in\phi(\Gamma^*)$ is in the same \textbf{branch of zeros} as $x^*$.
\item We say that $x^*$ is a \textbf{semi-regular zero} of $f$, if its dimension is well-defined and identical to the nullity of the Jacobian of $f$ at $x^*$, i.e. 
$$\dim_f(x^*) \define \rank\left(\nullspace(J_f(x^*))\right).$$
\item We say that a matrix $X^*\in\RR^{n,k}$ is a semi-regular zero of a smooth matrix-valued function $F:\Omega\subset\RR^{n,k}\rightarrow\RR^{n,k}$, if its vectorization $x^*=\vect(X^*)\in\RR^{n\cdot k}$ is a semi-regular zero of the vectorized function $f:\vec{\Omega}\subset\RR^{n\cdot k}\rightarrow\RR^{n\cdot k}$, $\vect(X)\mapsto \vect(F(X))$.
\end{enumerate}
\end{definition} 
To proof convergence of \eqref{eq:newtonpinv} for NEPv, let us first consider its vectorized companion \eqref{eq:newtonkron} again and denote $x_j\define \vect(X_j)$ and $f(x)\define\vect(F(X))$. To obtain a new update direction $\Delta{x}_j$, we need to be able to compute the least-squares solution 
\begin{equation}
\Delta{x}_j=-K_F(X_j)^+f(x_j)=\argmin_s \lVert f(x_j)+ K_F(X_j)s\rVert_2 \label{eq:lssolve}
\end{equation}
in every Newton step. By \cite[Thm 2.6]{singulargmres} and \cite[Thm. 2.8]{geometricgmres}, we have the following:
\begin{lemma}\label{lem:gmresls}
Let $\mathcal{A}:\RR^{n,k}\rightarrow\RR^{n,k}$ be a linear operator, $B\in\RR^{n,k}$, and consider the linear operator equation
\begin{equation}
 \mathcal{A}(X)=B. \label{eq:linop}
\end{equation} 
Then, if \eqref{eq:linop} is consistent, i.e., if $B\in\mathcal{R}(\mathcal{A})$, GL-GMRES finds a least-squares solution $X^*$ to \eqref{eq:linop} (w.r.t. the Frobenius norm) for any initial value $X_0\in\RR^{n,k}$ without breaking down if and only if $\mathcal{R}(\mathcal{A})\cap\nullspace(\mathcal{A})=\lbrace 0\rbrace$. 
\end{lemma} 
\begin{proof}
The result is a direct consequence of the connection between GL-GMRES for \eqref{eq:linop} and GMRES for its vectorization. For GMRES, the result was already proven in \cite{geometricgmres}.
\end{proof}  
We want to note that there is a variety of work suggesting techniques for solving (nearly) singular linear systems by Krylov subspace methods, including the use of singular preconditioners in \cite{gmressingularprecon} or the development of breakdown-free variants of GMRES (BFGMRES) in \cite{bfgmres}. These techniques can also be adapted to work in GL-GMRES in a similar way, if the plain algorithm does not yield satisfactory results. Additionally, we will comment on using iterative solvers designed for least-squares problems, such as LSQR, in \autoref{sec:lsqr}. \\
The following Lemma states that locally, the Newton correction equation is consistent:
\begin{lemma}[Consistency of the update equation] \label{lem:consistency}
Let $\mathbb{S}_k\define \lbrace S\in\RR^{k,k}:S=S^T\rbrace$ denote the set of symmetric $k\times k$-matrices and consider the NEPv update equation
\begin{equation}
L_F(X,\dx) \define \begin{bmatrix}H(V)\dv + L_H(V,\dv)V - (V\dl + \dv \Lambda)\\ -(V^T\dv + \dv^TV)\end{bmatrix} = \begin{bmatrix}H(V)V-V\Lambda\\I_k-V^TV\end{bmatrix}=F(X). \label{eq:nepvcorrection}
\end{equation}
\begin{enumerate}
\item[(i)] For every $X=(V,\Lambda)\in\RR^{n,k}\times\RR^{k,k}$, we have 
$$\mathcal{R}(L_F(X)) \subseteq (\RR^{n,k}\oplus \mathbb{S}_k) \subsetneq (\RR^{n,k}\oplus \RR^{k,k}),$$
i.e. $L_F(X)$ is always rank-deficient.
\item[(ii)] Let $X^*=(V^*,\Lambda^*)$ be a solution of \eqref{eq:NEPv} for which the operator 
\begin{equation}
L_{\widetilde{F}}(X^*,\dx) \define \begin{bmatrix}H(V^*)\dv + L_H(V^*,\dv)V^* - (V^*\dl + \dv \Lambda^*)\\ -{V^*}^T\dv \end{bmatrix} \label{eq:nepvcorrectionmod}
\end{equation}
is locally invertible. Then, for all $X=(V,\Lambda)\in\OO_{n,k}(\RR)\times\mathbb{S}_k$ sufficiently close to $X^*$, we have 
\begin{equation}
\mathcal{R}(L_F(X))\equiv (\RR^{n,k}\oplus\mathbb{S}_k),
\end{equation}
i.e. the correction equation \eqref{eq:nepvcorrection} is locally consistent.
\end{enumerate}
\end{lemma}
The interested reader is referred to the appendix for a proof of this result. By \autoref{lem:consistency}, we have that GL-GMRES will find a least squares solution to $$L_F(X_j,\dx_j)=-F(X_j)$$ in every step, if $\mathcal{R}(L_F(X_j))\cap\nullspace(L_F(X_j))=\lbrace 0\rbrace$, for every $j$. We denote the solution by $\dx_j=-L(X_j)^+F(X_j)$. We can now provide the following convergence result:
\begin{theorem}[Convergence of Newton's method for NEPv] \label{thm:convergence}
Let $H:\RR^{n,k}\rightarrow\RR^{n,n}$ be a smooth, symmetric matrix function and consider the NEPv-root-finding function 
$$F(X) = \begin{bmatrix}H(V)V-V\Lambda\\I_k-V^TV\end{bmatrix}.$$
Let $\Omega\subset\RR^{n+k,k}$ be an open domain containing a semi-regular zero $X^*$ of $F$ in which $\rank(L_F(X))=nk+\frac{k(k+1)}{2}$, for every $X\in\Omega$. Further, assume that the the conditions of \autoref{lem:gmresls} and \autoref{lem:consistency} are locally satisfied, i.e., one can always compute a least-squares solution to the update equation using GL-GMRES.\\
Then, for every open neighborhood $\Omega_1$ of $X^*$, there is a neighborhood $\Omega_0$ of $X^*$, such that, from every initial iterate $X_0\in\Omega_0$, the Newton iteration 
$$X_{j+1} = X_j - L_F(X_j)^+F(X_j)$$
converges quadratically to a semi-regular zero $\hat{X}\in\Omega_1$ of $F$ in the same branch as $X^*$.
\end{theorem} 
\begin{proof}
By \eqref{eq:frechetnepv} and \eqref{eq:frechetorthogonal}, $F$ is smooth if $H$ is smooth. Also, by \autoref{lem:gmresls} and \autoref{lem:consistency}, solving the update equation by GL-GMRES is always possible and gives a least-squares solution to 
$$\min_{\dx\in\RR^{n+k,k}} \lVert L_F(X_j,\dx)+F(X_j)\rVert_F=\min_{\Delta{x}\in\RR^{(n+k)\cdot k}}\lVert K_F(X_j)\Delta{x}+\vect(F(X_j))\rVert_2.$$ 
Thus, the formula 
$$X_{j+1}=X_j + \dx_j,\quad L_F(X_j,\dx_j)=-F(X_j),$$
is equivalent to the vectorized update 
$$x_{j+1}=x_j - J_f(x_j)^+f(x_j)$$
according to \eqref{eq:lssolve}. The quadratic convergence of Newton's method then follows by \cite[Thm. 4.1]{nonisolated}.
\end{proof}
We want to give a few quick notes on \autoref{thm:convergence}: First of all, the conditions of \autoref{lem:gmresls} might be hard to verify for a specific problem a priori without any knowledge of $H$. Also, \autoref{lem:gmresls} guarantees convergence for every initial guess in GL-GMRES, which is a bit more than we need since usually, no better guess than the all zero matrix is available for an arbitrary problem. Additionally, one might be able to weaken the assumption by using a different inner solver than plain GL-GMRES, as was pointed out earlier.\\
Secondly, in the inexact Newton framework presented before, the quadratic convergence from \autoref{thm:convergence} will generally not be achievable due to the inexactness of the solve. However, the order of convergence should still be faster than linear in practice, giving the algorithm an advantage over SCF. Since the solve of the update equation gets more accurate the closer we get to the solution, we can still locally achieve almost quadratic convergence. Additionally, the low accuracy in the first steps of Newton's method requires a small Krylov subspace in GL-GMRES and reduces the chance for break-downs in the ill-conditioned solve as the Krylov subspace will be of size way smaller than $nk+\frac{k(k+1)}{2}$.\\
Thirdly, the existence of a neighborhood around a semi-regular zero usually requires us to be sufficiently close to the solution when starting the iteration process, which is a well-known blemish of Newton's method. However, our idea of preprocessing by SCF should bring us \glqq close enough\grqq~to a semi-regular zero, where the question of what actually is \glqq close enough\grqq~is still a point of current research. In the numerical experiments in \autoref{sec:numerical}, there have been empirically good choices for every experiment determined by the current value of the Newton residual $\lVert F(X_j)\rVert_F$ and the effect of switching too early or too late is visualized. 
Lastly, a result similar to \autoref{thm:convergence} should be possible for GNEPv as well, if we assume that the additional function $G(V)$ and $L_G(V)$ still leave us with a locally consistent update equation according to \autoref{lem:consistency}. This is not done in this paper but experimentally showed to be valid.   
\subsection{Implementation details}
In the following, we will provide information on how to implement the steps from \autoref{alg:imn} efficiently, particularly the GL-GMRES for solving the update equation and the parameter choice in the inexact Newton process, that is choosing the forcing terms and damping factors.
\subsubsection{GL-GMRES for NEPv}
As derived earlier, the Fréchet derivative $L_F(X,\dx)$ in the update equation has the form
\begin{equation}
	L_F(X,\dx) = \begin{bmatrix} H(V)\dv + L_H(V,\dv)V - (V \dl + \dv \Lambda) \\ -(V^T\dv + \dv^TV)\end{bmatrix} \label{eq:frechetupdatenepv}
\end{equation}
for the NEPv \eqref{eq:NEPv} and  
\begin{equation}
	L_F(X,\dx) = \begin{bmatrix} H(V)\dv + L_H(V,\dv)V - \left(L_G(V,\dv)V\Lambda + G(V)(V \dl + \dv \Lambda)\right) \\ -(V^T\dv + \dv^TV)\end{bmatrix} \label{eq:frechetupdategnepv}
\end{equation}
for the GNEPv \eqref{eq:GNEPv}. In both cases, we are able to recycle the matrices $H(V)$ and $G(V)$ since they do not depend on $\dx$, meaning that the main effort in the evaluation of $L_F(X,\dx)$ is in the evaluation of $L_G(V,\dv)$ and $L_H(V,\dv)$. This is particularly appealing when no closed form for the Fréchet derivatives is available since $H(V)$ and $G(V)$ are available and can be reused in finite difference approximations 
$$ L_H(V,\dv)\approx \frac{H(V+h\dv)-H(V)}{h}$$
as. Thus, the effort in every Arnoldi step essentially reduces to one function evaluation of $H$.\\
We want to note than in some applications, it might be more suitable to compute the action $L_H(V,\dv)\cdot V$ of the Fréchet derivative on the matrix $V$ rather than forming the square matrix explicitly, thus avoiding the large matrix-matrix-multiplication. Special cases where this approach is beneficial include the Fréchet derivative having a low-rank structure, i.e. $L_H(V,\dv)=WY^T$, $W,Y\in\RR^{n,p}$, $p\ll n$.\\
Apart from the evaluation of $L_F(X,\dx)$ in every step, the second most time consuming operation is the orthogonalization of the newly computed basis vector against the existing basis vectors $V_i$. In \cref{alg:glgmres}, this orthogonalization is achieved by a modified Gram-Schmidt (MGS) procedure, which tends to get very expensive when the size of the Krylov space grows. The standard way to avoid the subspace growing too large is to employ restarts in GL-GMRES. This allows for a less time-consuming orthogonalization and requires less storage but, in practice, can lead to stagnating Newton processes close to the solution in some examples since the necessary accuracy was not achievable in the small Krylov subspace. In those cases, using deflation or weighting techniques might improve the convergence of GL-GMRES \cite{globalgmres2}.\\
In addition to that, we want to note that explicitly forming the matrix product $X^TY$, $X,Y=\RR^{n,k}$, when computing the Frobenius scalar product $\langle X,Y\rangle _F=\trace (X^TY)$ should be avoided since the computation of $X^{T}Y$ requires $\mathcal{O}(n^{2}\cdot k)$ flops and its off-diagonal entries are not needed in the trace. Instead, we suggest to use approaches requiring $\mathcal{O}(n\cdot k)$ operations, such as 
$$\langle X,Y\rangle_F = \sum_{j=1}^k \langle x_j,y_j\rangle_2 = \sum_{i=1}^n \sum_{j=1}^k x_{i,j}\cdot y_{i,j},$$
where $x_j,y_j$ are the columns of $X$ and $Y$, respectively. In MATLAB, both approaches were simililarly fast in our experiments, where we use \texttt{dot(X,Y,1)} to compute the column-wise scalar products $\langle x_j,y_j\rangle_F$ and elementwise-multiplication \texttt{X.*Y} for the last expression.
\subsubsection{Forcing terms and damping}\label{sec:eisenstatwalker}
For inexact Newton methods to be efficient, the choice of the forcing terms $\eta_j$, i.e. the relative accuracy at which the update equation is solved, and the damping technique are a crucial element. Balancing between the required Newton steps and the required GL-GMRES steps for solving the update equation is key. For our algorithm, we adapt the  parameters suggested by Eisenstat and Walker \cite{inexactnewton} in the vector-valued setting for our matrix-valued algorithm. 
\begin{enumerate}[leftmargin=\parindent,align=left,labelwidth=\parindent,labelsep=0pt]
\item[Choice 1: ] Given an initial $\eta_0\in[0,1)$, define 
\begin{equation}
	\eta_j = \frac{\bigl\lvert \lVert F(X_j)\rVert_F - \lVert R_{j-1}\rVert_F\bigr\rvert}{\lVert F(X_{j-1})\rVert_F},\quad j=1,2,\dots . \label{eq:forcingC1}
\end{equation}
To avoid oversolving, i.e. $\eta_j$ decreasing unnecessarily fast, we modify $\eta_j=\max\lbrace \eta_j, \eta_{j-1}^{(1+\sqrt{5})/2}\rbrace$, whenever $\eta_{j-1}^{(1+\sqrt{5})/2}>0.1$.
\item[Choice 2: ] Given an initial $\eta_0\in[0,1)$, $\gamma\in[0,1]$ and $\alpha\in(1,2]$, define
\begin{equation}
	\eta_j = \gamma \left(\frac{\lVert F(X_j)\rVert_F}{\lVert F(X_{j-1})\rVert_F}\right)^\alpha, \quad j=1,2,\dots .\label{eq:forcingC2}
\end{equation}
Again, we modify $\eta_j=\max\lbrace\eta_j, \gamma \eta_{j-1}^\alpha\rbrace$, whenever $\gamma \eta_{j-1}^\alpha >0.1$. According to Eisenstat and Walker, defining $\gamma=0.9$ and $\alpha=\frac{1+\sqrt{5}}{2}$ lead to empirically good results, which is why we choose these parameters here as well. 
\end{enumerate}  
Since the first choice is not applicable for the first Newton step as no update residual $R_{-1}$ is available yet, we use \eqref{eq:forcingC2} initially with the residuals $\lVert F(X_{j-1})\rVert_F,\lVert F(X_j)\rVert_F$ being available from the SCF-preprocessing. After the first Newton step, switching to \eqref{eq:forcingC1} lead to superior results in our experiments. The norms of $F(X_j)$ and the update residual are easily available from GL-GMRES and cheap to store. In addition to the safeguards mentioned above to avoid oversolving, we additionally limit $\eta_j$ to the interval $(0,0.9]$ in practice to avoid too inaccurate solutions.\\
The second key ingredient to inexact Newton methods is damping the inexact update directions to lead to a reduction in the residual. In the inexact Newton Backtracking method (INB) presented by Eisenstat and Walker \cite{forcingterm3}, the damping process is closely related to the choice of forcing terms. After an approximate update direction $\dx_j$ is computed, we damp $\dx_j$ by a parameter $\theta_{j}$ until the damped step satisfies 
$$\lVert F(X_j+\theta_j \dx_j)\rVert_F \leq \left[1-t(1-\eta_j)\right]\lVert F(X_j)\rVert_F.$$  
In each step, we choose the damping parameter to be the minimizer of a locally interpolating quadratic polynomial as follows: Let $g(\theta)=\lVert F(X_j+\theta \dx_j)\rVert_F^2$ and define by $p(\theta)$ the polynomial satisfying the Hermite interpolation conditions
$$p(0)=g(0),\quad p'(0)=g'(0),\quad p(1)=g(1).$$
One can easily verify that this polynomial is given by $p(\theta)=g(0) + g'(0)\theta + (g(1)-g(0)-g'(0))\theta^2$, where
\begin{eqnarray*}
	g(0)&=& \lVert F(X_k)\rVert_F^2,\quad g(1)\;=\; \lVert F(X_j+\dx_{j})\rVert_F^2,\\
	g'(\theta) 	&=& \frac{d}{d\theta} \left(\left\langle F(X_j+\theta \dx_{j}), F(X_k+\theta \dx_j)\right\rangle_F \right)\\
				&=& 2\left \langle L_F\left(X_j+\theta \dx_j,\frac{d}{d\theta} \theta \dx_{j}\right) , F(X_j+\theta \dx_{j})\right\rangle_F\\
				&=&	2\left\langle L_F(X_j+\theta \dx_{j},\dx_j),F(X_j+\theta \dx_j)\right\rangle_F,\\
	g'(0) &=& 2\left\langle L_F(X_j,\dx_j), F(X_j)\right\rangle_F.
\end{eqnarray*}
Thus, the optimal $\theta_{j}$ is given by the extreme point condition $p'(\theta^*)=0$, leading to 
$$\theta^{*} = \frac{-g'(0)}{2(g(1)-g(0)-g'(0))}.$$
The backtracking process is summed up in \cref{alg:backtrack}.\\ 
\begin{algorithm}[H]
		\SetAlgoVlined
	\KwIn{$X_j,\dx_j\in\RR^{n+k,k}$ Newton iterate and update direction, $F,L_F(X_j):\RR^{n+k,k}\rightarrow\RR^{n+k,k}$, $\eta_j\in[0,1)$ forcing term, $t>0$}
	\KwOut{Damped update direction $\dx_j$, damped forcing term $\eta_j$}
	Compute $g(0) = \lVert F(X_j)\rVert _F^2$, $g(1) = \lVert F(X_j+\dx_j)\rVert_F^2$ and set $s=g(1)-g(0)$\\ 		
	Compute $d=g'(0)=2\langle L_F(X_j,\dx_j),F(X_j)\rangle_F$\\ 
	\While{$\lVert F(X_j+\dx_j)\rVert _F > \left[1-t(1-\eta_j)\right]\lVert F(X_j)\rVert_F$}{
		Compute locally optimal $\theta = -\frac{d}{2(s-d)}$\\
		Compute damped parameters $\dx_j=\theta \dx_j$, $d = \theta d$ and $\eta_j=1-\theta (1-\eta)$
		} 
	\Return $\dx_j$ and $\eta_j$
	\vspace{-.2cm} 
	\caption{Inexact Newton backtracking}
	\label[algorithm]{alg:backtrack}
\end{algorithm}
Following the results from Eisenstat and Walker, we choose $t=10^{-4}$ in our experiments and allow for a limited number of backtracking steps only, where we found three or four steps to be an empirically good choice. 
To sum up our derivations, we will now briefly state the base variant of the inexact Matrix-Newton algorithm in \cref{alg:imn}. Note that the last step of SCF postprocessing is optional but allows for an easier comparison with the SCF solution. Computing an eigenvalue decomposition $\Lambda_{j+1}=Q^TDQ$ of $\Lambda$ instead and defining $V^*=V_{j+1}Q$, $\Lambda^*=D$, allows for comparable interpretability due to $D$ being diagonal while the computational cost is reduced since $k$ is usually a lot smaller than $n$.\\
\begin{algorithm}[H]
		\SetAlgoLined
	\KwIn{$V_0\in\OO_{n,k}(\RR)$, $G,H:\RR^{n,k}\rightarrow\RR^{n,n}$ and their Fréchet derivatives, $\tau>0$ convergence tolerance, $M\in\NN$ maximum iterations}
	\KwOut{$(V^*,\Lambda^*)$ solving \eqref{eq:NEPv}/\eqref{eq:GNEPv}}
	Choose a preprocessing tolerance and iteration limit and compute initial $X_0=\begin{bmatrix} V_0\\\Lambda_0\end{bmatrix}$ by SCF (\cref{alg:plainscf})\\
	Compute $\eta_0$ by \eqref{eq:forcingC2} using the residuals from \cref{alg:plainscf}\\
	\For{$j=0,\dots,M-1$}{
	 	Use global GMRES (\cref{alg:glgmres}) to find an approximate solution $E_j$ to $\lVert L_F(X_j,E_j)+F(X_j)\rVert_F\leq \eta_j \lVert F(X_j)\rVert _F$ \\
	 	Use backtracking (\cref{alg:backtrack}) to obtain a new damped iterate $X_{j+1} = X_j + \theta E_j$ and modified $\eta_j$ \\
	 	\If{$\lVert F(X_{j+1})\rVert _F<\tau$}{
	 		Tolerance reached, \textbf{break!}
	 	}
	 	Compute new forcing term $\eta_{j+1}$ by \eqref{eq:forcingC1}\\
	 } 
	 Perform a last postprocessing step of SCF using $X_{j+1}$ to obtain $(V^*,\Lambda^*)$, where now $\Lambda^*$ is diagonal\\
	 \Return $(V^*,\Lambda^*)$
	 \vspace{-.2cm} 
	\caption{Inexact matrix Newton algorithm for \eqref{eq:NEPv}/\eqref{eq:GNEPv} (JFNGK)}
	\label[algorithm]{alg:imn}
\end{algorithm}
\section{Numerical Experiments} \label{sec:numerical}
In this section, we apply our algorithm to a variety of test problems, namely the Kohn-Sham problem discussed in \cite{newtonvec} and \cite{optimalscf}, a robust LDA problem adapted from \cite{rlda} and a sum of trace ratios problem discussed in \cite{uinepv}. All experiments were performed using MATLAB 2024a on an Ubuntu 24.04 (64bit) system with AMD Ryzon 7 pro 4750U CPU (1.7 GHz).\\
In most of the experiments, we compare three different methods: 
\begin{enumerate}
\item The plain SCF algorithm (\cref{alg:plainscf}) using \texttt{eigs} to compute the $k$ desired eigenpairs.
\item The inexact Newton algorithm (\cref{alg:imn}) with SCF preprocessing and GL-GMRES as inner solver (JFNGK).
\item The inexact Newton algorithm (\cref{alg:imn}) with SCF preprocessing and MATLAB's built-in \texttt{gmres}-function as inner solver, where we solve \eqref{eq:newtonkron} without explicitly forming $K_F(x_j)$ but compute the matrix-vector product $K_F(\vect(X_j))v_i$ by $\vect(L_F(X_j,V_i))$ in a matrix-free way (JFNK).
\end{enumerate} 
In all our experiments, we use the exact Fréchet derivatives of $H(V)$ in all three methods as derived in the following sections. 
\subsection{The Kohn-Sham NEPv}\label[section]{sec:ks}
In this section, we consider two types of NEPv originating from Kohn-Sham theory, one of which is a simple model problem and one of which is the problem considered in \cite{newtonvec} for testing the performance of a vector-valued Newton approach. In the KS setting, the symmetric matrix function $H(V)$, often referred to as single-particle Hamiltonian in electronic structure calculations \cite{KSref2,KSref3,KSref1}, has the form
\begin{equation}
  H(V) = L + \Diag\left(L^{-1} \rho(V) - \gamma \rho(V)^{1/3}\right), \quad \gamma\geq 0, \label{eq:KS}
\end{equation}
where $L$ is a discrete Laplacian, $\rho(V)=\diag(VV^T)$ is the charge density of electrons, $\diag(X)$ and $\Diag(x)$ denote the vector containing the diagonal of the matrix $X$ and the diagonal matrix with the vector $x$ on its diagonal, respectively, and $\rho(V)^{1/3}$ is the elementwise cube root of the vector $\rho(V)$.\\
The NEPv with \eqref{eq:KS} can be derived by employing the KKT-conditions for the constrained energy minimization problem
\begin{equation*}
  \min_{V\in\OO_{n,k}(\RR)} \mathcal{E}(V) \define \frac{1}{2} \trace(V^T L V) + \frac{1}{4} \rho(V) L^{-1}\rho(V) - \frac{3}{4}\gamma\rho(V)^T \rho(V)^{1/3}.
\end{equation*}
Consequentially, we are interested in computing the eigenvectors associated with the $k$ smallest eigenvalues of $H(V)$.\\
For the computation of $L_H(V,E)$, the main effort is in finding the Fréchet derivative of $\rho(V)^{1/3}$, since $\Diag$ and $\diag$ are linear functions that have the simple Fréchet derivatives
$$ L_\Diag (x,e) = \Diag(e),\; x,e\in\RR^n \quad \text{and} \quad L_\diag(X,E)=\diag(E),\; X,E\in\RR^{n,n}.$$
By the product and chain rule, we then have that
$$L_\rho(V,E) = L_\diag(VV^T,L_{(XX^T)}(V,E))=\diag(VE^T + EV^T)= 2\diag(VE^T).$$
Therefor, one can verify that
$$L_{\rho^{1/3}}(V,E)=L_{x^{1/3}}(\rho(V),L_\rho(V,E))=\frac{2}{3} \rho(V)^{-2/3} \odot \diag(VE^T),$$
where $\rho(V)^{-2/3}$ is again applied elementwise and $\odot$ is the pointwise array multiplication. Note that $VV^T$ has only real positive diagonal entries, which are the squared norms of the corresponding rows of $V$ and, as such, $\rho(V)^{-2/3}$ is well defined.\\
By these parts, we obtain the Fréchet derivative
\begin{equation}
  L_H(V,E) = 2\Diag \left( L^{-1} \diag(VE^T) - \frac{1}{3}\gamma \left(\rho(V)^{-2/3}\odot \diag(VE^T)\right)\right). \label{eq:frechetks}
\end{equation}
\subsubsection{A simple 1D model} \label{sec:kssimple}
For our first test, consider the simple Kohn-Sham equation
\begin{equation}
	H(V)=L+\gamma \Diag(L^{-1} \rho(V)),\quad \gamma>0, \label{eq:KSsimple}
\end{equation}
where $L=\tridiag(-1,2,-1)$ is a discrete 1D-Laplacian. Using \eqref{eq:frechetks}, it is immediate to see that
\begin{equation}
  L_H(V,E) = 2\gamma \Diag\left(L^{-1} \diag(VE^T)\right). \label{eq:frechetkssimple}
\end{equation}
The dependency of \eqref{eq:KSsimple} on the choice of $\gamma$ and the convergence behaviour of SCF was analyzed in \cite{optimalscf}, where the main result was that SCF converges relatively fast for small choices of $\gamma$ and requires increasingly more steps for choices of $\gamma$ close to one. In fact, SCF failed to converge for $\gamma\geq 0.85$.\\
Note that the case $\gamma=0$ leads to the standard eigenvalue problem $LV=V\Lambda$, which is solved by SCF in one iteration, whereas larger choices of $\gamma$ lead to an increasing nonlinearity in the problem that makes solving the problem more challenging for SCF. In our experiment, we initialize the SCF algorithm with the two smallest eigenpairs of $L$ for different choices of $\gamma\neq 0$.\\
Doing so, we are able to reproduce the results observed in \cite[Ex. 1]{optimalscf} for the SCF algorithm, where the test problem $n=10$, $k=2$ and $\gamma\in\lbrace 0.5, 0.6, 0.7, 0.75, 0.8, 0.85, 0.9\rbrace$ are used. For the Newton method, we observe that the number of required iterations does not depend on the choice of $\gamma$ as much (see \cref{fig:kssimpleerr}).\\
We do two steps of SCF preprocessing before switching to Newton's method. The convergence tolerance of $\tau=\log((n+k)\cdot k)10^{-15}$ is reached by Newton's method for all choices of $\gamma$ after around nine to twelve steps while SCF only achieved the tolerance for $\gamma\leq 0.8$ and requires increasingly more steps as $\gamma$ tends to 1. As a consequene, we can also observe that the relative computation time required ($100\cdot\frac{t_{\scalebox{.5}{SCF}}}{t_{\scalebox{.5}{Newton}}}$) reduces from about $60\%$ for $\gamma=0.5$ to around $10\%$ for $\gamma=0.8$ and $1\%$ for the cases where SCF failed to converge within 4000 steps. 
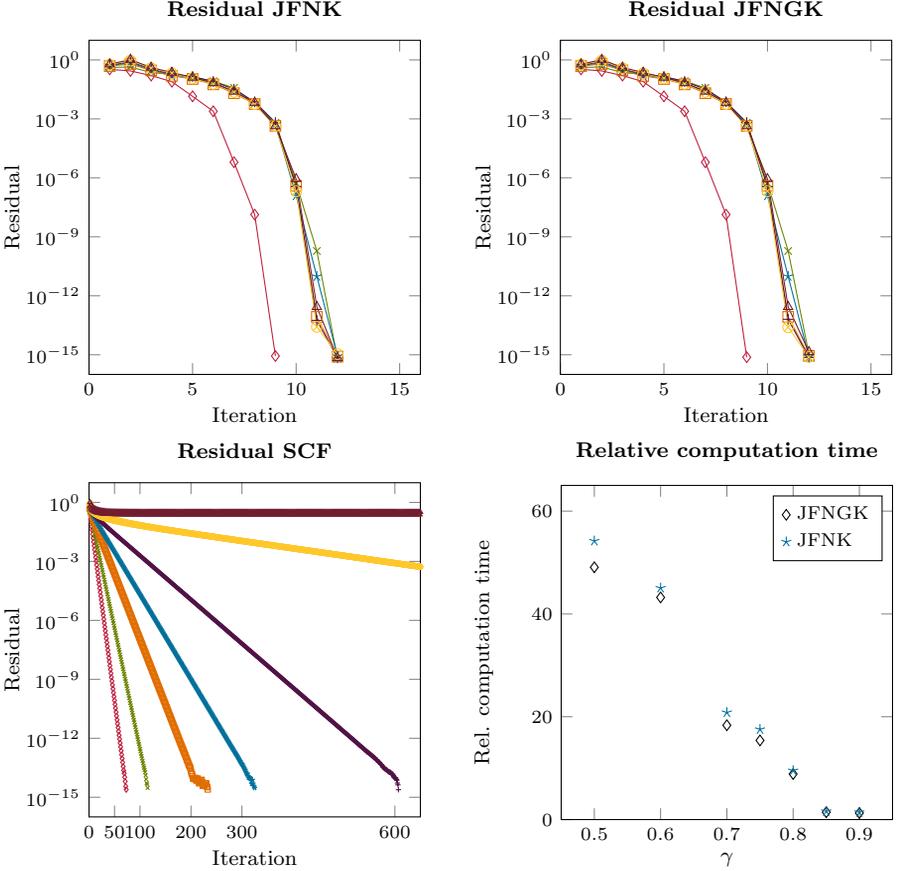
\begin{figure}[H]
\begin{subfigure}[l]{0.495\textwidth}
\begin{tikzpicture}
\begin{semilogyaxis}[
width=6cm,
height=6cm, 
xlabel style={yshift=.2cm},
xlabel={Iteration},
xmin=0, xmax=16,
xmode = linear,
xtick = {0,5,10,15},
x tick label style={font=\footnotesize},
ytickten = {-15,-12,-9,-6,-3,0},
ymin = 1e-16, ymax = 10,
ylabel style={yshift=-.2cm},
ylabel={Residual},
y tick label style={font=\footnotesize},
title style={font=\small\bfseries},
title={Residual JFNK},
legend style={font=\footnotesize},
legend pos=north east,
legend cell align={left},
label style={black,font=\small},
]
\addplot [color=tured, mark=diamond,mark size=2pt]
  table[row sep=crcr]{%
1	0.33233222397377\\
2	0.274910866115501\\
3	0.160133415319668\\
4	0.0780063832221844\\
5	0.0140862340131785\\
6	0.00244977216117655\\
7	6.26528304941312e-06\\
8	1.3584822667824e-08\\
9	8.67000840423086e-16\\
};

\addplot [color=tugreen, mark=x,mark size=2pt]
  table[row sep=crcr]{%
1	0.414941146843705\\
2	0.436984015278399\\
3	0.226809806743918\\
4	0.187280033627192\\
5	0.134735127911451\\
6	0.0825843917678137\\
7	0.0380292057583253\\
8	0.00605406929135051\\
9	0.000635220718683891\\
10	5.4996963131064e-07\\
11	1.9067856567705e-10\\
12	6.17678494026716e-16\\
};

\addplot [color=tuorange, mark=square,mark size=2pt]
  table[row sep=crcr]{%
1	0.487455640551286\\
2	0.621257257276362\\
3	0.291150653649281\\
4	0.153852095796874\\
5	0.107721211232756\\
6	0.057379815766812\\
7	0.0203643954082714\\
8	0.0056920040276397\\
9	0.000441426431698022\\
10	3.99397581542999e-07\\
11	8.41903303384718e-14\\
12	8.12650833956616e-16\\
};

\addplot [color=tublue, mark=star,mark size=2pt]
  table[row sep=crcr]{%
1	0.520622335945583\\
2	0.71942690436022\\
3	0.322149319117258\\
4	0.172724427378587\\
5	0.124931116006633\\
6	0.0686142484253638\\
7	0.0255748232002914\\
8	0.00653819740556698\\
9	0.000517302368864614\\
10	1.23798854577308e-07\\
11	9.74427824698245e-12\\
12	8.48363141713211e-16\\
};

\addplot [color=tupurple,mark=+,mark size=2pt]
  table[row sep=crcr]{%
1	0.552094692510583\\
2	0.820843384582362\\
3	0.352489327280474\\
4	0.191480628715575\\
5	0.142731297429587\\
6	0.080285853407252\\
7	0.0325798769929823\\
8	0.00734116161164006\\
9	0.000657255233086806\\
10	3.64330636169547e-07\\
11	5.63023561711409e-14\\
12	6.98454046386222e-16\\
};

\addplot [color=tuyellow, mark=otimes,mark size=2pt]
  table[row sep=crcr]{%
1	0.582088884498424\\
2	0.925109116824362\\
3	0.382319304345558\\
4	0.210133581402981\\
5	0.116702596864263\\
6	0.0605284250628155\\
7	0.0233948882966374\\
8	0.00559413510782873\\
9	0.000424418227439773\\
10	2.09904638472712e-07\\
11	2.63098920955193e-14\\
12	1.08513297249616e-15\\
};

\addplot [color=tudarkred, mark=triangle,mark size=2pt]
  table[row sep=crcr]{%
1	0.610774345713896\\
2	1.03191766395509\\
3	0.411791098289955\\
4	0.228696428025126\\
5	0.128269749824592\\
6	0.0674505664242036\\
7	0.0249310645428841\\
8	0.00670957017251702\\
9	0.000432004142981628\\
10	8.77235527476024e-07\\
11	2.75142054595091e-13\\
12	6.89573856283146e-16\\
};
%\legend{$\gamma=0.5$,$\gamma=0.6$,$\gamma=0.7$,$\gamma=0.75$,$\gamma=0.8$,$\gamma=0.85$,$\gamma=0.9$}

\end{semilogyaxis}
\end{tikzpicture}
\end{subfigure}
\vspace{-.8cm}%
\begin{subfigure}[r]{0.495\textwidth}
\begin{tikzpicture}
\begin{semilogyaxis}[
width=6cm,
height=6cm, 
xlabel style={yshift=.2cm},
xlabel={Iteration},
xmin=0, xmax=16,
xmode = linear,
xtick = {0,5,10,15},
x tick label style={font=\footnotesize},
ytickten = {-15,-12,-9,-6,-3,0},
ymin = 1e-16, ymax = 10,
ylabel style={yshift=-.2cm},
ylabel={Residual},
y tick label style={font=\footnotesize},
title style={font=\small\bfseries},
title={Residual JFNGK},
legend style={font=\footnotesize},
legend pos=north east,
legend cell align={left},
label style={black,font=\small},
]
\addplot [color=tured, mark=diamond,mark size=2pt]
  table[row sep=crcr]{%
1	0.33233222397377\\
2	0.274910866115501\\
3	0.160133415319668\\
4	0.0780063832221844\\
5	0.0140862340131787\\
6	0.00244977216117655\\
7	6.26528304951676e-06\\
8	1.35848228585722e-08\\
9	7.55288322117785e-16\\
};

\addplot [color=tugreen, mark=x,mark size=2pt]
  table[row sep=crcr]{%
1	0.414941146843705\\
2	0.436984015278399\\
3	0.226809806743918\\
4	0.187280033627192\\
5	0.134735127911451\\
6	0.0825843917678135\\
7	0.0380292057583255\\
8	0.00605406929135008\\
9	0.000635220718683702\\
10	5.49969631499347e-07\\
11	1.90678705836702e-10\\
12	1.112562436566e-15\\
};

\addplot [color=tuorange, mark=square,mark size=2pt]
  table[row sep=crcr]{%
1	0.487455640551286\\
2	0.621257257276362\\
3	0.291150653649281\\
4	0.153852095796874\\
5	0.107721211232756\\
6	0.057379815766812\\
7	0.0203643954082725\\
8	0.00569200402764009\\
9	0.000441426431697704\\
10	3.99397581872241e-07\\
11	8.44492865061451e-14\\
12	8.83067967079767e-16\\
};

\addplot [color=tublue, mark=star,mark size=2pt]
  table[row sep=crcr]{%
1	0.520622335945583\\
2	0.71942690436022\\
3	0.322149319117258\\
4	0.172724427378586\\
5	0.124931116006633\\
6	0.0686142484253643\\
7	0.0255748232002914\\
8	0.00653819740556668\\
9	0.00051730236886469\\
10	1.23798855283522e-07\\
11	9.74416317538347e-12\\
12	8.0503132006544e-16\\
};

\addplot [color=tupurple,mark=+,mark size=2pt]
  table[row sep=crcr]{%
1	0.552094692510583\\
2	0.820843384582362\\
3	0.352489327280474\\
4	0.191480628715575\\
5	0.142731297429588\\
6	0.0802858534072517\\
7	0.0325798769929841\\
8	0.0073411616116386\\
9	0.000657255233086716\\
10	3.64330636935307e-07\\
11	6.38536456503077e-14\\
12	7.38136560039637e-16\\
};

\addplot [color=tuyellow, mark=otimes,mark size=2pt]
  table[row sep=crcr]{%
1	0.582088884498424\\
2	0.925109116824362\\
3	0.382319304345557\\
4	0.210133581402981\\
5	0.116702596864263\\
6	0.0605284250628152\\
7	0.0233948882966374\\
8	0.00559413510782898\\
9	0.000424418227439413\\
10	2.09904638800884e-07\\
11	2.50868106520752e-14\\
12	7.78765265973991e-16\\
};

\addplot [color=tudarkred, mark=triangle,mark size=2pt]
  table[row sep=crcr]{%
1	0.610774345713896\\
2	1.03191766395509\\
3	0.411791098289954\\
4	0.228696428025126\\
5	0.128269749824591\\
6	0.0674505664242037\\
7	0.0249310645428846\\
8	0.0067095701725178\\
9	0.000432004142981626\\
10	8.77235527373641e-07\\
11	2.75290238931693e-13\\
12	1.40117557262049e-15\\
};
%\legend{$\gamma=0.5$,$\gamma=0.6$,$\gamma=0.7$,$\gamma=0.75$,$\gamma=0.8$,$\gamma=0.85$,$\gamma=0.9$}
\end{semilogyaxis}
\end{tikzpicture}
\end{subfigure}
\begin{subfigure}[l]{0.495\textwidth}
\include{figures/kssimple/scf_residual.tex}
\end{subfigure}
\vspace{-.9cm}
\begin{subfigure}[r]{0.495\textwidth}
\begin{tikzpicture}

\begin{axis}[%
width=6cm,
height=6cm, 
xlabel style={yshift=.2cm},
xlabel={$\gamma$},
xmin=0.45, xmax = 0.95,
xtick = {0.5,0.6,0.7,0.8,0.9},
x tick label style={font=\footnotesize},
ylabel style={yshift=-.2cm},
ylabel={Rel. computation time},
ymin = 0, ymax = 65,
ytick = {0, 20, 40, 60},
y tick label style={font=\footnotesize},
title style={font=\small\bfseries},
title={Relative computation time},
legend style={font=\footnotesize},
legend pos=north east,
legend cell align={left},
label style={black,font=\small},
]
\addplot [color=tublack, only marks, mark=diamond, mark size=2pt]
  table[row sep=crcr]{%
0.5	49.076312210517\\
0.6	43.2100404288147\\
0.7	18.3428388034036\\
0.75	15.3945507097435\\
0.8	8.85832587368849\\
0.85	1.43463559744035\\
0.9	1.27505196907765\\
};
\addlegendentry{JFNGK}

\addplot [color=tublue, only marks, mark=star, mark size=2pt]
  table[row sep=crcr]{%
0.5	54.1923541914336\\
0.6	45.0428828104935\\
0.7	20.8237999390919\\
0.75	17.5453624522606\\
0.8	9.52164303046067\\
0.85	1.59608643724264\\
0.9	1.40756388865239\\
};
\addlegendentry{JFNK}

\end{axis}

\end{tikzpicture}%
\end{subfigure}
\caption{Convergence results for simple KS-Problem by JFNK(top left), JFNGK(top right) and SCF(bottom left) as well as relative timing(bottom right). The convergence plots show results for $\gamma=0.5$(\textcolor{tured}{$\diamond$}), $\gamma=0.6$(\textcolor{tugreen}{x}), $\gamma=0.7$(\textcolor{tuorange}{$\square$}), $\gamma=0.75$(\textcolor{tublue}{$\ast$}), $\gamma=0.8$(\textcolor{tupurple}{+}), $\gamma=0.85$(\textcolor{tuyellow}{$\otimes$}) and $\gamma=0.9$(\textcolor{tudarkred}{$\triangle$}) }
\label{fig:kssimpleerr}
\end{figure}
\subsubsection{A 3D model}
For our second experiment, we consider \eqref{eq:KS} with parameters equivalent to those in \cite[Sec. 6]{newtonvec}, where $L$ is a discrete 3D-Laplacian and $\gamma=1$. That is, we have 
$$L=L_m\otimes I_{m^2} + I_m\otimes L_m\otimes I_m + I_{m^2} \otimes L_m,\quad m=\sqrt[3]{n},$$
with $L_m=\tridiag(-1,2,-1)\in\RR^{m,m}$ being a discrete 1D-Laplacian. Again, we start the SCF-algorithm with $V_0\in\OO_{n,k}(\RR)$ being the $k$ eigenvectors of $L$ associated with the $k$ smallest eigenvalues and set the maximum size of the Krylov subspace in both GMRES and GL-GMRES to 400. The convergence tolerance is set to $\tau=(n+k)10^{-15}$. In \autoref{fig:ksconvergence}, one can see the convergence results of SCF, JFNK and JFNGK for $n=32^3=32768$ and $k=2$ (left figure) as well as $k=8$ (right figure). For $k=2$, we switch to Newton as soon as $\lVert F(X_j)\rVert_F<10^{-5}$ in SCF, whereas for $k=8$ switching to Newton once $\lVert F(X_j)\rVert_F<10^{-6}$ proofed to be a better choice. Consequences of switching to Newton's method earlier or later are visualized in a second experiment (\autoref{fig:ksswitching}).
For the first experiment, JFNK and JFNGK can be seen to be mathematically equivalent for both $k=2$ as well as $k=8$, which is to be expected from \autoref{lem:gmres}. One can also see that, after switching to Newton, the convergence to the desired tolerance happens rapidly after only six steps for $k=2$, switching after 31 steps of SCF, and eleven steps for $k=8$, switching after 207 steps of SCF, while the SCF algorithm still requires about 70 and 350 more steps, respectively. The higher order of convergence can also be observed in the computation time, where JFNGK required $75\%$ of the SCF computation time and JFNK is only slightly slower at $77\%$ of the SCF computation time for $k=2$ as well as $71\%$ and $65\%$ for $k=8$, respectively. In this example, the average size of the Krylov subspace required in (GL-)GMRES for the inner solve was around 50 for $k=2$ and around 80 for $k=8$, which is only around $0.08\%$ and $0.03\%$ of the problem size ($(n+k)\cdot k$), respectively, allowing for a fast solve and significant improvement over the plain SCF. 
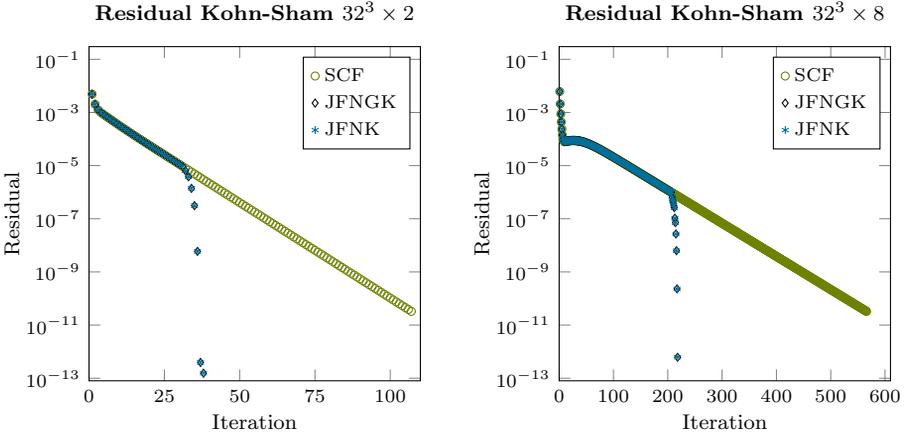
\begin{figure}[H]
\begin{subfigure}[l]{0.495\textwidth}
\begin{tikzpicture}
\begin{semilogyaxis}[
width=6cm,
height=6cm, 
xlabel style={yshift=.2cm},
xlabel={Iteration},
xmin=0, xmax=110,
xmode = linear,
xtick = {0,25,50,75,100},
x tick label style={font=\footnotesize},
ymin = 8e-14, ymax=0.3, 
ytickten = {-13,-11,-9,-7,-5,-3,-1},
ylabel style={yshift=-.2cm},
ylabel={Residual},
y tick label style={font=\footnotesize},
title style={font=\small\bfseries},
title={Residual Kohn-Sham $32^3\times 2$},
legend style={font=\footnotesize},
legend pos=north east,
legend cell align={left},
label style={black,font=\small},
]
\addplot [color=tugreen, only marks, mark=o, mark size = 1.5pt]
  table[row sep=crcr]{%
1	0.00486921102840518\\
2	0.00203231913941258\\
3	0.00125491349090554\\
4	0.000982441297012943\\
5	0.000812240485508377\\
6	0.000676501522406266\\
7	0.000563970288567445\\
8	0.000470557942154877\\
9	0.000393128961116848\\
10	0.000328947038253841\\
11	0.0002756786067012\\
12	0.000231383481212338\\
13	0.000194471197857672\\
14	0.000163645709468688\\
15	0.00013785200869247\\
16	0.000116229891972372\\
17	9.80758776953331e-05\\
18	8.2812587478678e-05\\
19	6.99643846191507e-05\\
20	5.91380477632812e-05\\
21	5.00074185230282e-05\\
22	4.2301160556062e-05\\
23	3.57929512518633e-05\\
24	3.0293579717784e-05\\
25	2.5644545447068e-05\\
26	2.1712845096247e-05\\
27	1.83867057898349e-05\\
28	1.55720772871313e-05\\
29	1.31897363026931e-05\\
30	1.11728875117286e-05\\
31	9.46516973629328e-06\\
32	8.01899426036843e-06\\
33	6.79415661670185e-06\\
34	5.7566744197964e-06\\
35	4.87781266746821e-06\\
36	4.13326502989067e-06\\
37	3.50246524111356e-06\\
38	2.96800725600284e-06\\
39	2.51515652008888e-06\\
40	2.13143771192955e-06\\
41	1.80628672245827e-06\\
42	1.53075670211728e-06\\
43	1.29726964665763e-06\\
44	1.09940636649777e-06\\
45	9.31728857413065e-07\\
46	7.89630016461437e-07\\
47	6.69206465332914e-07\\
48	5.67150893603514e-07\\
49	4.80660926897727e-07\\
50	4.07361953782946e-07\\
51	3.45241771967485e-07\\
52	2.92595232596604e-07\\
53	2.47977362264932e-07\\
54	2.10163635901835e-07\\
55	1.78116331363345e-07\\
56	1.50956022714277e-07\\
57	1.27937424302338e-07\\
58	1.08428923678778e-07\\
59	9.18952400753409e-08\\
60	7.78827290700641e-08\\
61	6.60069320400595e-08\\
62	5.59420173651856e-08\\
63	4.74118471169958e-08\\
64	4.0182390065786e-08\\
65	3.40553042826651e-08\\
66	2.88624958779304e-08\\
67	2.44614999915216e-08\\
68	2.07315774952029e-08\\
69	1.75704019879585e-08\\
70	1.4891248138586e-08\\
71	1.26206148924617e-08\\
72	1.06962119447018e-08\\
73	9.06524337455177e-09\\
74	7.6829679999168e-09\\
75	6.51146331425802e-09\\
76	5.51858964482317e-09\\
77	4.67711189362636e-09\\
78	3.96394283142407e-09\\
79	3.35951751295681e-09\\
80	2.84725596765475e-09\\
81	2.41310555140636e-09\\
82	2.04515308291626e-09\\
83	1.73330757053133e-09\\
84	1.46901215654182e-09\\
85	1.24501626647787e-09\\
86	1.05517488880869e-09\\
87	8.94281387217842e-10\\
88	7.57920591875618e-10\\
89	6.42352137830978e-10\\
90	5.44407406343131e-10\\
91	4.61394872554026e-10\\
92	3.91040655775294e-10\\
93	3.31414235450384e-10\\
94	2.80880389152855e-10\\
95	2.38052765687583e-10\\
96	2.01753259338512e-10\\
97	1.70990394742152e-10\\
98	1.44916995454343e-10\\
99	1.22820657300079e-10\\
100	1.04092172033612e-10\\
101	8.82205852713019e-11\\
102	7.47685687344317e-11\\
103	6.33675335014644e-11\\
104	5.37057613898229e-11\\
105	4.55162580773503e-11\\
106	3.85763427217586e-11\\
107	3.26945014007273e-11\\
};

\addplot [color=tublack, only marks, mark=diamond, mark size=1.5pt]
  table[row sep=crcr]{%
1	0.00486921102840518\\
2	0.00203231913941258\\
3	0.00125491349090554\\
4	0.000982441297012943\\
5	0.000812240485508377\\
6	0.000676501522406266\\
7	0.000563970288567445\\
8	0.000470557942154877\\
9	0.000393128961116848\\
10	0.000328947038253841\\
11	0.0002756786067012\\
12	0.000231383481212338\\
13	0.000194471197857672\\
14	0.000163645709468688\\
15	0.00013785200869247\\
16	0.000116229891972372\\
17	9.80758776953331e-05\\
18	8.2812587478678e-05\\
19	6.99643846191507e-05\\
20	5.91380477632812e-05\\
21	5.00074185230282e-05\\
22	4.2301160556062e-05\\
23	3.57929512518633e-05\\
24	3.0293579717784e-05\\
25	2.5644545447068e-05\\
26	2.1712845096247e-05\\
27	1.83867057898349e-05\\
28	1.55720772871313e-05\\
29	1.31897363026931e-05\\
30	1.11728875117286e-05\\
31	9.46516973629328e-06\\
32	6.37779360077941e-06\\
33	3.78970737463785e-06\\
34	1.39328206486564e-06\\
35	3.11643916639308e-07\\
36	5.93361743158572e-09\\
37	3.9184735530483e-13\\
38	1.54450443853292e-13\\
};

\addplot [color=tublue, only marks, mark=asterisk, mark size=1.5pt]
  table[row sep=crcr]{%
1	0.00486921102840518\\
2	0.00203231913941258\\
3	0.00125491349090554\\
4	0.000982441297012943\\
5	0.000812240485508377\\
6	0.000676501522406266\\
7	0.000563970288567445\\
8	0.000470557942154877\\
9	0.000393128961116848\\
10	0.000328947038253841\\
11	0.0002756786067012\\
12	0.000231383481212338\\
13	0.000194471197857672\\
14	0.000163645709468688\\
15	0.00013785200869247\\
16	0.000116229891972372\\
17	9.80758776953331e-05\\
18	8.2812587478678e-05\\
19	6.99643846191507e-05\\
20	5.91380477632812e-05\\
21	5.00074185230282e-05\\
22	4.2301160556062e-05\\
23	3.57929512518633e-05\\
24	3.0293579717784e-05\\
25	2.5644545447068e-05\\
26	2.1712845096247e-05\\
27	1.83867057898349e-05\\
28	1.55720772871313e-05\\
29	1.31897363026931e-05\\
30	1.11728875117286e-05\\
31	9.46516973629328e-06\\
32	6.37779360078571e-06\\
33	3.78970737740026e-06\\
34	1.39328218356968e-06\\
35	3.11643871272076e-07\\
36	5.93361494579922e-09\\
37	3.92557288862086e-13\\
38	1.54598490084917e-13\\
};
\legend{SCF,JFNGK,JFNK}
\end{semilogyaxis}
\end{tikzpicture}%
\end{subfigure}
\vspace{-1cm}%
\begin{subfigure}[r]{0.495\textwidth}
\include{figures/ks32768x8e-6/residuals.tex}
\end{subfigure}
	\caption{Convergence results for KS-problem, $n=32^3$, $\gamma=1$, $k=2$(left) and $k=8$(right)}
	\label{fig:ksconvergence}
\end{figure}     
In a second experiment, we consider again the KS-problem of size $n=32^3$, $k=8$, and discuss the importance of switching to Newton at the correct time. In our setup before, we use $10^{-6}$ as our criterion for the switch to Newton's method. Now, we want to see the behavior of JFNGK if we switch earlier, at $\lVert F(X_j)\rVert_F<10^{-5}$, which is the tolerance used for $k=2$, or a lot later, at $\lVert F(X_j)\rVert_F<10^{-8}$. The resulting convergence curves are displayed in \autoref{fig:ksswitching}. One can see that 
\begin{figure}[H]
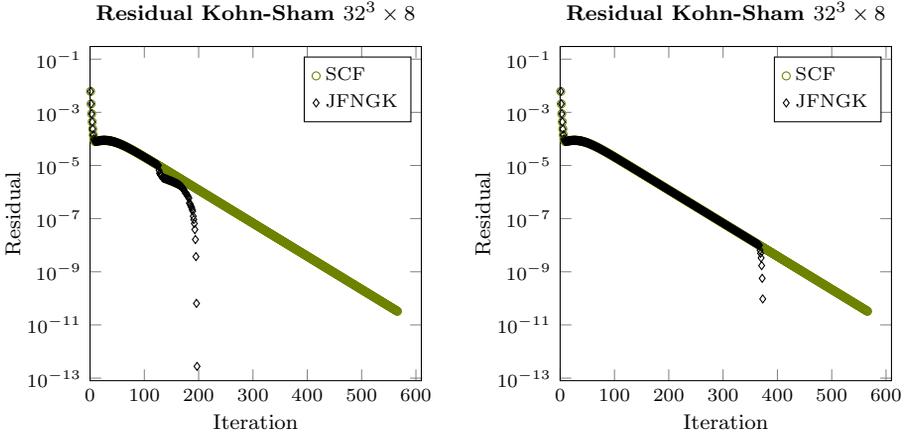

\begin{subfigure}[l]{0.495\textwidth}
\include{figures/ks32768x8e-5/residuals.tex}
\end{subfigure}
\vspace{-1cm}%
\begin{subfigure}[r]{0.495\textwidth}
\include{figures/ks32768x8e-8/residuals.tex}
\end{subfigure}
	\caption{Consequences of switching to Newton too early(left) or too late(right)}
	\label{fig:ksswitching}
\end{figure}
switching too early (left plot) can heavily delay the rapid convergence, which will come into play after around 190 steps, when 60 steps of Newton's method have already been taken. Note that the residual is around $10^{-6}$ at that point, which is exactly the tolerance we picked in our first experiment. While JFNGK converges after 198 steps, which is nine steps faster than in the first test, it took almost twice as long as in \autoref{fig:ksconvergence} since the additional 60 steps of Newton's method are a lot more costly than 80 more steps of SCF.\\
On the other hand, switching late naturally leads to an even more rapid convergence than in the first experiment, leading to only seven Newton steps required. However, the switch to Newton's method happens around 150 SCF steps later, resulting in a slower computation of around $95\%$ of SCF, compared to $71\%$ in the first experiment.\\
The main takeaway from this experiment is that picking the correct timing for the switch to Newton's method is key to the performance of JFNGK and can heavily depend on the problem. Since switching too early leads a more severe slowdown than switching a bit too late, we usually choose a rather conservative tolerance for the switch to make sure to be close enough to benefit from the higher order of convergence immediately.\\
Since the choice of tolerance is mostly empirical in our experiments, finding a flexible and easy to check criterion for the switch is an interesting point of research and could lead to a significant performance and reliability improvement.
\subsection{The robust LDA GNEPv}
For our next experiment, we consider the GNEPv
\begin{equation}
  H(v) v = \lambda G(v) v \label{eq:rlda}
\end{equation}
originating from robust Rayleigh quotient optimization (RRQM), where the the goal is to find a vector $v\in\RR^n\setminus\lbrace0\rbrace$ minimizing the Rayleigh quotient
$$r(v)=\frac{v^T H(v) v}{v^T G(v) v}.$$
Finding robust solutions can be interpreted as optimizing an uncertain, data-based problem for the worst possible behavior of the input data \cite{rlda}.\\
In our experiment, we discuss a robust LDA-problem, where we try to find the Fisher discriminant vector $v$ in a two set classification problem. If $X,Y\in\RR^n$ are two random variables with mean $\mu_X,\mu_Y$ and covariance matrices $\Sigma_X,\Sigma_Y$, it is easy to see that the discriminant vector $v^*$ leading to an optimal separation can, up to a scale factor, be uniquely determined by
\begin{equation}
v^*= (\Sigma_X + \Sigma_Y)^{-1} (\mu_Y-\mu_Y). \label{eq:CLDA}
\end{equation}
However, in real life problems, one has usually access to a sample of data only and the sample means and covariances are empirically estimated rather than known explicitly. That is, we have uncertainty in the problem parameters and try to optimize for the worst case behaviour.\\
In \cite[Ex. 5]{rlda}, the following problem is considered: Let
$$H(v)\equiv\overline{\Sigma}_X + \overline{\Sigma}_Y + (\delta_X + \delta_Y)I_n\quad \text{and}\quad G(v)=f(v)f(v)^T,$$
where
\begin{equation}
  f(v) = (\overline{\mu}_X - \overline{\mu}_Y) - \sign\left(v^T (\overline{\mu}_X - \overline{\mu}_Y)\right) \left(\frac{S_X v}{\sqrt{v^T S_X v}}+\frac{S_Y v}{\sqrt{v^T S_Y v}}\right).
\end{equation}
Here, $\overline{\mu}_X,\overline{\mu}_Y$ and $\overline{\Sigma}_X,\overline{\Sigma}_Y$ are the estimated sample means and covariance matrices, respectively. They are estimated by randomly sampling the data 100 times and computing the average of all 100 computed sample means and covariances. The parameters $\delta_X,\delta_Y$ describe the maximum deviation to the average covariance matrices, that is
$$\delta_X = \max_{i=1,...,100} \lVert \overline{\Sigma}_X - \Sigma_X^{(i)}\rVert _F,$$
where $\Sigma_X^{(i)}$ is the sample covariance matrix in the $i$-th resampling iteration. Similarly, the covariance matrices of the sampled mean values $\mu_X^{(i)},\mu_Y^{(i)}$ are used to compute $S_X$ and $S_Y$.\\
Note than in this application, the Fréchet derivative of $G$ has the special low-rank structure $$L_G(v,e) = f(v) L_f(v,e)^T + L_f(v,e) f(v)^T,$$ meaning that we do not want to compute the full size matrix explicitly but use scalar products with $f(v)$ and $L_f(v,e)$ instead. The Fréchet derivative of $f(v)$ can be assembled from the Fréchet derivatives of the quotients $q_S(v) = \frac{Sv}{\sqrt{v^TSv}}$, which are given by
$$L_{q_S}(v,e) = S\left(\frac{1}{\sqrt{v^TSv}}e-\frac{v^TS e}{\sqrt{(v^TSv)^3}}v\right).$$
Note that $\sign(x)$ is only differentiable with $\sign'(x)=0$ if $x\neq 0$, so we require $v$ to not be orthogonal to the difference in sample means for $f$ to be differentiable. In that case, we have
$$ L_f(v,e) = -\sign\left(v^T (\overline{\mu}_X - \overline{\mu}_Y)\right) \left(L_{q_{S_X}}(v,e) + L_{q_{S_Y}}(v,e)\right).$$
In our experiment, we compare the performance of our algorithm to the two different SCF algorithms provided by Bai, Lu and Vandereycken in the RobustRQ software package, which is available from \url{http://www.unige.ch/~dlu/rbstrq.html}. Their level-shifted SCF-algorithm of order one is denoted by SCF in our plots while their second order SCF-algorithm, which converges quadratically, is denoted by SCF2. Additionally, we compare the classification accuracy to the non-robust LDA solution using the same estimated mean and covariances in \eqref{eq:CLDA}.\\
To set up the classification, we split the data into training- and test-data, where the training-data is used to estimate the problem parameters $\overline{\Sigma}$, $\overline{\mu}$, $S$ and $\delta$ and the test-data is used to verify the classification accuracy on unknown data. The amount of training points used is controlled by a parameter $\alpha\in(0,1)$. For every choice of $\alpha$, we perform 100 tests with the data being split randomly every time and plot the average test sample accuracy (TSA) and its deviation.\\
We verify the performance of our algorithm on two benchmark problems from the UCI machine learning repository \cite{uci}: The  ionosphere data set, which consists of 351 instances with 34 attributes each, and the sonar data set, which consists of of 208 instances with 60 attributes each. Thus, the corresponding GNEPv will be of size $n=34$ and $n=60$, respectively. Since $k=1$, we use the build-in GMRES for our experiments.\\
For the ionosphere set, we use $\alpha\in\lbrace 0.1, 0.15, 0.2, 0.4, 0.6,0.7\rbrace$ for the data splitting and preprocess by 100 steps of SCF and 2 steps of SCF2 or until a tolerance of $10^{-3}$ is reached and switch to Newton's method afterwards. Here, we allow a Krylov space of size 30 and use $\tau=10^{-10}$ as our convergence tolerance.\\
In \cref{fig:rldaion}, one can see that the Newton method leads to classifications of comparable accuracy to SCF and SCF2 in a similar computation time. Only for $\alpha=0.1$, the accuracy by JFNK is a bit worse.  All three approaches are superior to the CLDA in the sense of classification accuracy, justifying the idea to use robust classification. For smaller choices of $\alpha$, Newton's method is faster than SCF but slower than SCF2, while for $\alpha\geq0.4$, it is faster than both SCF and SCF2. 
For the sonar data set, we use $\alpha\in\lbrace 0.3, 0.4, 0.5, 0.6, 0.7, 0.8\rbrace$ and preprocess by 60 steps of SCF and 6 steps of SCF or until a tolerance of $10^{-3}$ is reached. Again, we allow 30 GMRES-steps and use $\tau=10^{-10}$. In \cref{fig:rldasonar}, one can observe that the TSA of Newton's method is able to match SCF and SCF2 while all three robust methods outperform the CLDA. In terms of computation time, our algorithm is faster than SCF but slower than SCF2 for $\alpha\leq 0.4$ and marginally faster than both algorithms for $\alpha\geq 0.5$.\\
Overall, we can conclude that the Newton algorithm is able to outperform or at least match the SCF algorithm for both data sets and all choices of $\alpha$ while being approximately on par with or slightly faster than the SCF2 algorithm for medium and large choices of $\alpha$ in both examples. Thus, our algorithm is able to compete with SCF for GNEPv \eqref{eq:GNEPv} of this type as well.      
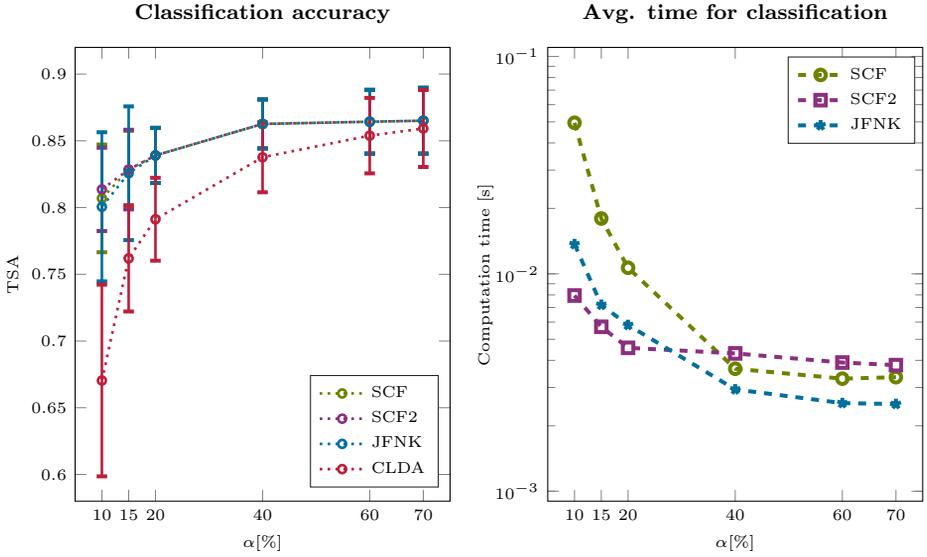
\begin{figure}[H]
\begin{subfigure}[l]{0.495\textwidth}
\begin{tikzpicture}
\begin{axis}[%
width=5cm,
height=6cm,
scale only axis,
xmin=5,
xmax=75,
xlabel={$\alpha\text{[\%]}$},
xtick = {10, 15, 20, 40, 60, 70},
ymin=0.58,
ymax=0.92,
ylabel={TSA},
ytick = {0.6, 0.65, 0.7, 0.75, 0.8, 0.85, 0.9},
tick label style = {font=\scriptsize},
label style = {black, font=\scriptsize},
x label style={yshift=.2cm},
y label style={yshift=-.4cm},
axis background/.style={fill=white},
legend style={at={(0.97,0.03)}, anchor=south east, legend cell align=left, align=left, draw=white!15!black, font=\scriptsize},
title style={font=\small\bfseries},
title={Classification accuracy},
]
\addplot [color=tugreen, dotted, line width=1pt, mark=o, mark options={solid, tugreen}, mark size = 1.5pt]
 plot [error bars/.cd, y dir = both, y explicit, error bar style={line width=1pt, solid}, error mark options={solid, rotate = 90, tugreen, mark size=2pt,line width=1.5pt}]
 table[row sep=crcr, y error plus index=2, y error minus index=3]{%
10	0.806898734177215	0.0403735056195196	0.0403735056195196\\
15	0.828361204013378	0.0291482982288973	0.0291482982288973\\
20	0.839039145907473	0.0206880820512772	0.0206880820512772\\
40	0.862654028436018	0.0183489427132093	0.0183489427132093\\
60	0.864326241134753	0.0238898323805494	0.0238898323805494\\
70	0.865094339622641	0.0246700978344484	0.0246700978344484\\
};
\addlegendentry{SCF}

\addplot [color=tulightpurple, dotted, line width=1pt, mark=o, mark options={solid, tulightpurple}, mark size = 1.5pt]
 plot [error bars/.cd, y dir = both, y explicit, error bar style={line width=1pt, solid}, error mark options={solid, rotate = 90, tulightpurple, mark size=2pt,line width=1.5pt}]
 table[row sep=crcr, y error plus index=2, y error minus index=3]{%
10	0.813607594936709	0.0312094354993918	0.0312094354993918\\
15	0.828461538461538	0.0298583602244609	0.0298583602244609\\
20	0.839074733096085	0.0206368504404832	0.0206368504404832\\
40	0.862654028436018	0.0183489427132093	0.0183489427132093\\
60	0.864326241134753	0.0238898323805494	0.0238898323805494\\
70	0.865094339622641	0.0246700978344484	0.0246700978344484\\
};
\addlegendentry{SCF2}

\addplot [color=tublue, dotted, line width=1pt, mark=o, mark options={solid, tublue}, mark size = 1.5pt]
 plot [error bars/.cd, y dir = both, y explicit, error bar style={line width=1pt, solid}, error mark options={solid, rotate = 90, tublue, mark size=2pt,line width=1.5pt}]
 table[row sep=crcr, y error plus index=2, y error minus index=3]{%
10	0.800569620253164	0.0558646008235972	0.0558646008235972\\
15	0.82571906354515	0.0500914825236274	0.0500914825236274\\
20	0.839074733096085	0.0206368504404832	0.0206368504404832\\
40	0.862654028436018	0.0183489427132093	0.0183489427132093\\
60	0.864326241134753	0.0238898323805494	0.0238898323805494\\
70	0.865094339622641	0.0246700978344484	0.0246700978344484\\
};
\addlegendentry{JFNK}

\addplot [color=tured, dotted, line width=1pt, mark=o, mark options={solid, tured}, mark size = 1.5pt]
 plot [error bars/.cd, y dir = both, y explicit, error bar style={line width=1pt, solid}, error mark options={solid, rotate = 90, tured, mark size=2pt,line width=1.5pt}]
 table[row sep=crcr, y error plus index=2, y error minus index=3]{%
10	0.670411392405063	0.0718291342061486	0.0718291342061486\\
15	0.761939799331104	0.039818304477345	0.039818304477345\\
20	0.791245551601424	0.0310498285014945	0.0310498285014945\\
40	0.837677725118483	0.0262398535878169	0.0262398535878169\\
60	0.853900709219859	0.0282791045844503	0.0282791045844503\\
70	0.859245283018868	0.0288424334165034	0.0288424334165034\\
};
\addlegendentry{CLDA}

\end{axis}
\end{tikzpicture}%
\end{subfigure}
\vspace{-.8cm}%
\begin{subfigure}[r]{0.495\textwidth}
\begin{tikzpicture}

\begin{axis}[%
width=5cm,
height=6cm,
scale only axis,
xmin=5,
xmax=75,
xlabel={$\alpha\text{[\%]}$},
ylabel={Computation time [s]},
xtick = {10, 15, 20, 40, 60, 70},
ymode=log,
ymin=0.0009,
ymax=0.11,
ytickten = {-3,-2,-1},
title style={font=\small\bfseries},
title={Avg. time for classification},
tick label style = {font=\scriptsize},
label style = {black, font=\scriptsize},
x label style={yshift=.2cm},
y label style={yshift=-.4cm},
axis background/.style={fill=white},
legend style={legend cell align=left, align=left, draw=white!15!black,font=\scriptsize}
]
\addplot [color=tugreen, dashed, line width=1.5pt, mark=o, mark options={solid, tugreen}]
  table[row sep=crcr]{%
10	0.04952116\\
15	0.01796865\\
20	0.01066285\\
40	0.00366018\\
60	0.00329516\\
70	0.00334686\\
};
\addlegendentry{SCF}

\addplot [color=tulightpurple, dashed, line width=1.5pt, mark=square, mark options={solid, tulightpurple}]
  table[row sep=crcr]{%
10	0.0079485\\
15	0.00570704\\
20	0.00456722\\
40	0.00430531\\
60	0.00390907\\
70	0.00380214\\
};
\addlegendentry{SCF2}

\addplot [color=tublue, dashed, line width=1.5pt, mark=star, mark options={solid, tublue}]
  table[row sep=crcr]{%
10	0.01368858\\
15	0.00720078\\
20	0.00581416\\
40	0.00293983\\
60	0.00254267\\
70	0.00251844\\
};
\addlegendentry{JFNK}

\end{axis}
\end{tikzpicture}%
\end{subfigure}
\caption{Convergence results for different choices of $\alpha$, Ionosphere-Data-Set}
\label{fig:rldaion}
\end{figure}
\vspace*{-.5cm}%
\begin{figure}[H]
\begin{subfigure}[l]{0.495\textwidth}
\begin{tikzpicture}
\begin{axis}[%
width=5cm,
height=6cm,
scale only axis,
xmin=25,
xmax=85,
xlabel={$\alpha\text{[\%]}$},
xtick = {30, 40, 50, 60, 70,80},
ymin=0.48,
ymax=0.82,
ylabel={TSA},
ytick = {0.5,0.55, 0.6,0.65, 0.7, 0.75,0.8},
tick label style = {font=\scriptsize},
label style = {black, font=\scriptsize},
x label style={yshift=.2cm},
y label style={yshift=-.4cm},
axis background/.style={fill=white},
legend style={at={(0.97,0.03)}, anchor=south east, legend cell align=left, align=left, draw=white!15!black, font=\scriptsize},
title style={font=\small\bfseries},
title={Classification accuracy},
]
\addplot [color=tugreen, dotted, line width=1pt, mark=o, mark options={solid, tugreen}, mark size = 1.5pt]
 plot [error bars/.cd, y dir = both, y explicit, error bar style={line width=1pt, solid}, error mark options={solid, rotate = 90, tugreen, mark size=2pt,line width=1.5pt}]
 table[row sep=crcr, y error plus index=2, y error minus index=3]{%
30	0.700890410958904	0.0451722899156914	0.0451722899156914\\
40	0.7168	0.038626938838593	0.038626938838593\\
50	0.732019230769231	0.0389838035651989	0.0389838035651989\\
60	0.737261904761905	0.040086508014359	0.040086508014359\\
70	0.755238095238095	0.0481336102245255	0.0481336102245255\\
80	0.747857142857143	0.0627750823537531	0.0627750823537531\\
};
\addlegendentry{SCF}

\addplot [color=tulightpurple, dotted, line width=1pt, mark=o, mark options={solid, tulightpurple}, mark size = 1.5pt]
 plot [error bars/.cd, y dir = both, y explicit, error bar style={line width=1pt, solid}, error mark options={solid, rotate = 90, tulightpurple, mark size=2pt,line width=1.5pt}]
 table[row sep=crcr, y error plus index=2, y error minus index=3]{%
30	0.70027397260274	0.0452044311823144	0.0452044311823144\\
40	0.7168	0.038626938838593	0.038626938838593\\
50	0.732019230769231	0.0389838035651989	0.0389838035651989\\
60	0.737261904761905	0.040086508014359	0.040086508014359\\
70	0.755238095238095	0.0481336102245255	0.0481336102245255\\
80	0.747857142857143	0.0627750823537531	0.0627750823537531\\
};
\addlegendentry{SCF2}

\addplot [color=tublue, dotted, line width=1pt, mark=o, mark options={solid, tublue}, mark size = 1.5pt]
 plot [error bars/.cd, y dir = both, y explicit, error bar style={line width=1pt, solid}, error mark options={solid, rotate = 90, tublue, mark size=2pt,line width=1.5pt}]
 table[row sep=crcr, y error plus index=2, y error minus index=3]{%
30	0.70027397260274	0.0452044311823144	0.0452044311823144\\
40	0.7168	0.038626938838593	0.038626938838593\\
50	0.732019230769231	0.0389838035651989	0.0389838035651989\\
60	0.737261904761905	0.040086508014359	0.040086508014359\\
70	0.755238095238095	0.0481336102245255	0.0481336102245255\\
80	0.747857142857143	0.0627750823537531	0.0627750823537531\\
};
\addlegendentry{JFNK}

\addplot [color=tured, dotted, line width=1pt, mark=o, mark options={solid, tured}, mark size = 1.5pt]
 plot [error bars/.cd, y dir = both, y explicit, error bar style={line width=1pt, solid}, error mark options={solid, rotate = 90, tured, mark size=2pt,line width=1.5pt}]
 table[row sep=crcr, y error plus index=2, y error minus index=3]{%
30	0.568287671232877	0.0626272963419186	0.0626272963419186\\
40	0.668	0.0495311349893512	0.0495311349893512\\
50	0.693557692307692	0.0427542085033657	0.0427542085033657\\
60	0.711904761904762	0.0509591010299667	0.0509591010299667\\
70	0.735873015873016	0.0500689848872931	0.0500689848872931\\
80	0.730476190476191	0.0645546012941675	0.0645546012941675\\
};
\addlegendentry{CLDA}

\end{axis}
\end{tikzpicture}%
\end{subfigure}
\vspace{-.8cm}%
\begin{subfigure}[r]{0.495\textwidth}
\begin{tikzpicture}

\begin{axis}[%
width=5cm,
height=6cm,
scale only axis,
xmin=25,
xmax=85,
xlabel={$\alpha\text{[\%]}$},
ylabel={Computation time [s]},
xtick = {30,40, 50,60,70,80},
ymode=log,
ymin=0.009,
ymax=0.5,
ytickten = {-2,-1,0},
title style={font=\small\bfseries},
title={Avg. time for classification},
tick label style = {font=\scriptsize},
label style = {black, font=\scriptsize},
x label style={yshift=.2cm},
y label style={yshift=-.4cm},
axis background/.style={fill=white},
legend style={legend cell align=left, align=left, draw=white!15!black,font=\scriptsize}
]
\addplot [color=tugreen, dashed, line width=1.5pt, mark=o, mark options={solid, tugreen}]
  table[row sep=crcr]{%
30	0.18429974\\
40	0.1309971\\
50	0.09248391\\
60	0.06041457\\
70	0.0391535\\
80	0.02431865\\
};
\addlegendentry{SCF}

\addplot [color=tulightpurple, dashed, line width=1.5pt, mark=square, mark options={solid, tulightpurple}]
 table[row sep=crcr]{%
30	0.01700054\\
40	0.01584523\\
50	0.01587898\\
60	0.01641482\\
70	0.01488733\\
80	0.01432658\\
};
\addlegendentry{SCF2}

\addplot [color=tublue, dashed, line width=1.5pt, mark=star, mark options={solid, tublue}]
  table[row sep=crcr]{%
30	0.03843434\\
40	0.01547881\\
50	0.01471958\\
60	0.01444936\\
70	0.01361222\\
80	0.01195524\\
};
\addlegendentry{JFNK}

\end{axis}
\end{tikzpicture}%
\end{subfigure}
\caption{Convergence results for different choices of $\alpha$, Sonar-Data-Set}
\label{fig:rldasonar}
\end{figure}
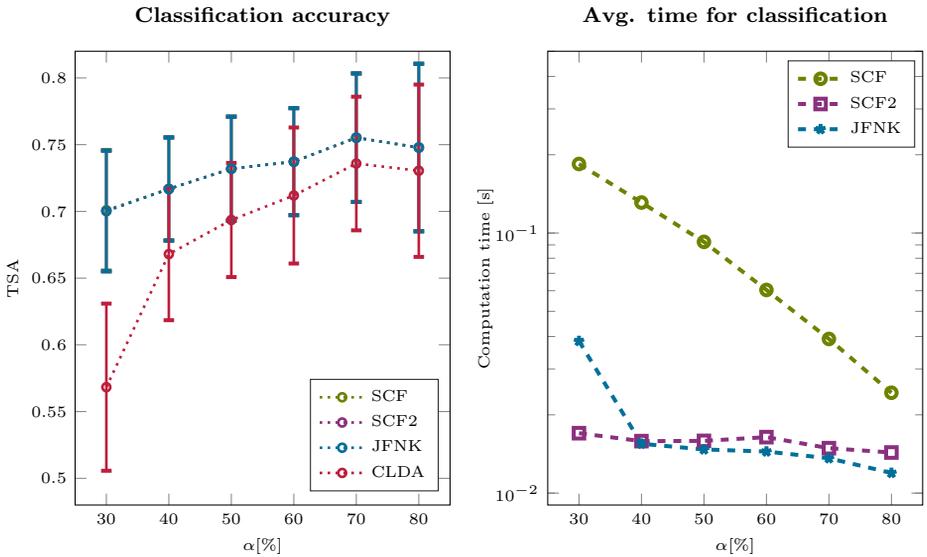

\subsection{The sum of trace ratios}
As our last example, we  consider the sum of trace ratios (SumTR) problem from \cite{uinepv}, where we want to maximize 
\begin{equation}
f_\alpha(V)\define (1-\alpha)\phi(V) + \alpha \psi(V)\trace(V^TD),\quad \alpha\in[0,1],~D\in\RR^{n,k}, \label{eq:sumTR}
\end{equation} 
subject to $V^TV=I_k$. In Example 7.1 and 7.2, the specific problem 
$$\phi(X)\define\frac{\trace(V^TAV)}{\trace(V^TBV)},\quad \psi(V)=\frac{1}{\sqrt{\trace(V^TBV)}},$$
where $A,B\in\RR^{n,n}$ are s.p.d., is considered. Note that the case $\alpha=0$ gives the well-known Trace Ratio Maximization problem (TRP), which is a special type of NEPv for which SCF is known to converge quadratically\cite{nepv}.\\
In \cite{uinepv}, it has been shown that the maximization of $f_\alpha(V)$ from \eqref{eq:sumTR} is equivalent to solving the NEPv 
$$ H_\alpha(V)V=V\Lambda,$$
where $H_\alpha(V) =(1-\alpha)H_\phi(V)+\alpha\trace(V^TD)H_\psi(V)+\alpha\psi(V)(DV^T+VD^T)$ and 
\begin{equation}
H_\phi(V)=\frac{2}{\trace(V^TBV)}(A-\phi(V)B),\quad H_\psi(V)=\frac{-\psi(V)}{\trace(V^TBV)}B \label{eq:sumTRnepv}
\end{equation}   
for the largest eigenpairs of $H_\alpha(V)$. From \eqref{eq:sumTRnepv}, it is immediate to compute the corresponding Fr\'echet derivatives using the chain rule to obtain 
\begin{eqnarray}
L_{H_\phi}(V,\dv) &=& -2\left(\frac{\trace(V^TB\dv)}{\trace(V^TBV)}H_\phi(V) + \frac{\trace(V^TH_\phi(V)\dv)}{\trace(V^TBV)}B\right),\label{eq:frechetHphi}\\
L_{H_\psi}(V,\dv) &=& -3\frac{\trace(V^TH_\psi(V)\dv)}{\trace(V^TBV)}B.\label{eq:frechetHpsi}
\end{eqnarray}
In \cite{uinepv}, the original NEPv using $H_\alpha(V)$ is replaced by an aligned function $G_\alpha(V)\define H_\alpha([\![V]\!])$ which is constructed to be orthogonally invariant. The alignment essentially consist of replacing $V$ by $V\mathcal{P}(V^TD)$, where $\mathcal{P}(Y)$ computes the orthogonal polar factor of $Y$. The Fréchet derivative of the aligned function $G_\alpha$ involves the Fr\'echet derivatives of $H_\phi$ and $H_\psi$ available from \eqref{eq:frechetHphi} and \eqref{eq:frechetHpsi}, respectively, as well as the Fr\'echet derivative of the alignment operator. The interested reader is referred to \cite[Cor. 5.1]{uinepv} for an explicit form of $L_{G_\alpha}(V,\dv)$. We also want to mention that source code for the performance of SCF on this type of problem is freely available from \url{https://github.com/ddinglu/uinepv}.\\
In our experiment, we copy the setup from Example 7.2, in which $n=3$, $k=2$ and
$$	A=\begin{bmatrix*}[r]-3.242 & -0.450 & 1.807\\-0.450 & -1.630 & 0.790\\1.807 & 0.790 & 0.226\end{bmatrix*},~
	B=\begin{bmatrix*}[r]0.592 &1.873 & 0.175\\1.873 & 6.332 & 0.617\\0.175 & 0.617 & 0.488\end{bmatrix*},~
	D=\begin{bmatrix*}[r]-1.430 & 2.768\\-0.120 & -0.630\\1.098 & 2.229\end{bmatrix*}.$$
In \autoref{fig:uinepv}, convergence results for $\alpha\in\lbrace 0.085,0.25,0.305,0.5,0.605,0.66\rbrace$ are displayed. Here, we only use JFNGK and found a tolerance of $10^{-2}$ or at most 20 steps of SCF to be an empirically food choice for switching.\\
One can see that neither SCF nor Newton method converges for $\alpha_4=0.5$ and convergence of SCF (solid line) is very slow for $\alpha_3=0.305$ as well as $\alpha_5=0.605$, whereas Newton's method (dashed line) manages to converge within only two to four steps after switching for $\alpha_1$, $\alpha_2$, $\alpha_5$ and $\alpha_6$ and requires eight steps for $\alpha_3$. Again, the advantage of the quadratic order of convergence over the linear order of convergence of SCF becomes obvious from the plots. However, we want to note that JFNGK was not able to beat SCF in terms of computation time for this problem due to the complexity of the evaluation of $L_{G_\alpha}(V)$, which, due to the alignment function, requires the computation of multiple singular value decompositions and solving Lyapunov equations, leading to extremely expensive solves since the Fr\'echet derivative has to be evaluated numerous times to expand the Krylov subspace.
\begin{figure}[h]
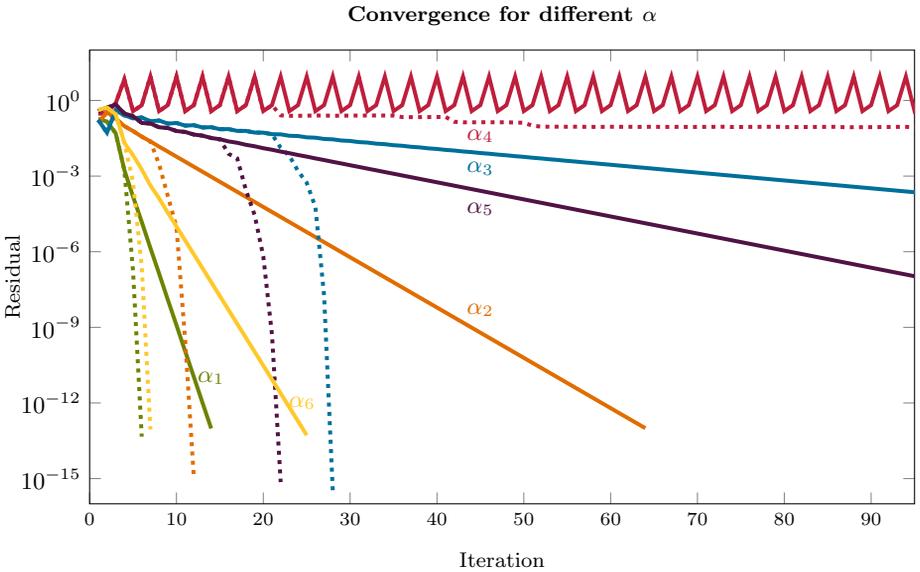

\centering
\include{figures/uinepv/residuals.tex}
\vspace{-1cm}%
\caption{Convergence of SCF(solid) and JFNGK(dashed) for different choices of $\alpha$ in the SumTR-problem}
\label{fig:uinepv}
\end{figure}
\subsection{A note on global LSQR as inner solver} \label{sec:lsqr}
To finish off this work, we want to give a brief comment on the use of different iterative solvers for the Newton correction equation \eqref{eq:nepvcorrection}. As noted in \autoref{lem:gmresls}, we require $\mathcal{R}(L_F(X_j))\,\cap\,\nullspace(L_F(X_j))=\lbrace 0\rbrace$ to be able to solve the update equation by Gl-GMRES without breakdown. In cases where this condition is not fulfilled, which is rarely the case very close to the solution in our experiments, it might be beneficial to use iterative solvers specifically designed to solve least squares problems, the most notable one being global LSQR \cite{lsqr,gllsqr}.
 While this solver does not require a full orthogonalization since it is based on the Golub-Kahan-bidiagonalization, it requires two function evaluations per Krylov-step: one with $L_F(X_j)$ and one with its adjoint $L_F^*(X_j)$, making it a lot more costly in cases where only small Krylov subspaces are required for convergence. Another problem is in finding the adjoint operator in the first place. For GNEPv, the adjoint can be explicitly formulated in terms of the adjoints of $L_G(V)$ and $L_H(V)$ by 
\begin{equation}
\resizebox{\textwidth}{!}{$L_F^*(X,\dx) = \begin{bmatrix}H(V)\dv + L_H^*(V)[\dv V^T] - \left(V(\dl+\dl^T)+G(V)\dv \Lambda^T + L_G^*(V)[\dv\Lambda V^T]\right)\\ -V^TG(V)\dv\end{bmatrix},$}\label{eq:adjointgnepv}
\end{equation}
which, for the case of NEPv ($G(V)=I_n,~L_G(V)\equiv 0$) reduces to 
\begin{equation}
L_F^*(X,\dx) = \begin{bmatrix}H(V)\dv + L_H^*(V)[\dv V^T] - \left(V(\dl+\dl^T)+\dv \Lambda^T \right)\\ -V^T\dv\end{bmatrix}. \label{eq:adjointnepv}
\end{equation}
The interested reader is referred to the appendix for a detailed description of GL-LSQR and the derivation of this formula.\\
In \autoref{fig:localsolve}, we have compared the relative residual for solving the update equation close to the optimal solution of a 3D-Kohn-Sham-problem of size $10^3\times8$ by global GMRES and global LSQR.
\begin{figure}[h]
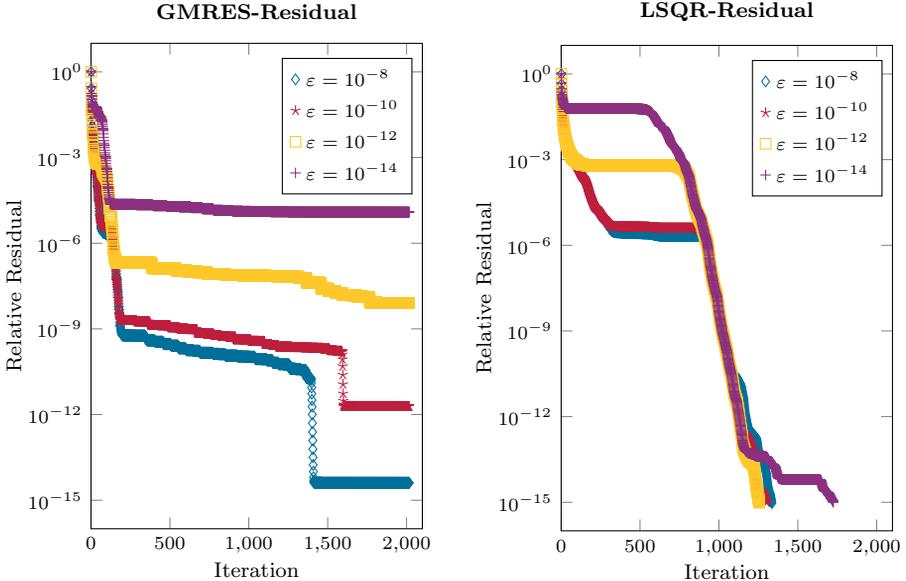

\begin{subfigure}[l]{0.495\textwidth}
\include{figures/compareks/1000x8GMRES.tex}
\end{subfigure}
\vspace{-.8cm}%
\begin{subfigure}[r]{0.495\textwidth}
\include{figures/compareks/1000x8LSQR.tex}
\end{subfigure}
\caption{Solution to $L_F(X,E)=-F(X)$ at $X=X^*+\varepsilon P$ by global GMRES (left) and global LSQR (right), 3D-Kohn-Sham-problem \eqref{eq:KS} of size $10^3\times 8$}
\label{fig:localsolve}
\end{figure}
For that, we choose $X=X^*+\varepsilon P$, where $X^*$ is the optimal solution computed by SCF, $\varepsilon$ is a small scaling parameter ranging from $10^{-8}$ to $10^{-14}$ and $P=\begin{bmatrix}P_1\\P_2\end{bmatrix}$ is a perturbation matrix with $P_1\in\RR^{n,k}$ being Gaussian and $P_2=\frac{1}{2}(\widehat{P}_2+\widehat{P}_2^T)$ being the symmetric part of the Gaussian matrix $\widehat{P}_2\in\RR^{k,k}$, and try to solve the update equation $L_F(X,E)=-F(X)$ at this point. In this experiment, we can see that GMRES does not find a solution with the desired tolerance $\tau=10^{-15}$ in all cases since the iteration stagnates before reaching the solution. We also observe that the stagnation happens earlier the smaller the perturbation is. For the smallest perturbation $\varepsilon=10^{-14}$, a relative accuracy of just $10^{-5}$ is reached, while for $\varepsilon=10^{-8}$, an accuracy of $10^{-14}$ can be achieved. On the other hand, LSQR always finds a solution with the desired accuracy of $10^{-15}$ after around 1300 steps for $\varepsilon\in\{10^{-8},10^{-10},10^{-12}\}$ and around 1700 steps for $\varepsilon=10^{-14}$ without full stagnation. Nonetheless, there is also a plateau after around 100-300 steps at a lower precision than the corresponding GMRES graph.\\
From these observations, we conclude that GMRES is the method of choice for our Newton method for multiple reasons: First of all, the case where GMRES stagnates rarely occurred in our experiments. Secondly, even if GMRES stagnates, we still observe improvement in the Newton iteration with the low accuracy solution. Since in practice we limit the maximum size of the Krylov subspace, taking multiple steps of GMRES at a low precision is usually faster than waiting for LSQR to converge to a high accuracy solution. Lastly, the inexact Newton framework frequently requires moderately accurate solutions, meaning that relative residuals of $10^{-4}$ or $10^{-5}$ suffice to get almost quadratic order of convergence close to the solution. This means that we do not have to solve really close to the solution since the quadratic order of convergence can give us accurate solutions at $X_j$ if $\lVert F(X_{j-1})\rVert_F\approx 10^{-9}$, which, assuming a moderate level of smoothness, compares to the curves for $\varepsilon=10^{-8}$ or $\varepsilon=10^{-10}$, and the solution at the start of the plateau usually satisfies the moderate forcing accuracy, allowing for smaller subspaces and reliable solves.\\
However, if one observes convergence issues with the proposed method, using LSQR instead should salvage this issue and lead to better results if we choose the accuracy high enough. Using GMRES first and switching to LSQR close to the solution is also an option, if one has a priori knowledge of local convergence issues and sophisticated switching criteria.       	  
\section{Conclusion}
In this paper, we have introduced an inexact Matrix-Newton method for solving NEPv and GNEPv. For this algorithm, we have reported on how to implement the algorithm efficiently by using a matrix-free global GMRES approach for solving the update equation and given a result indicating local quadratic convergence of the algorithm. The algorithm was compared to the SCF algorithm in a variety of numerical experiments using MATLAB. Here we could see that the Newton algorithm is able to compete with SCF in most applications where SCF has linear convergence speed and yield optimal solutions, even in some cases where SCF failed to converge. While these experiments indicate the advantages of the Newton approach, we also mentioned that a reliable rule for switching between SCF and Newton's method is still a point of current research that is crucial to the performance of the algorithm in many applications. The work was finished off by a few comments on alternative solvers, such as LSQR, that can give higher accuracy solutions but proved to be a lot slower in practice. 
\section*{Acknowledgments}
The author wants to thank Dr. Philip Saltenberger for the numerous conversations and input that helped initiate this work. The author also wants to thank Prof. Ding Lu for the insightful discussions and his comments on earlier versions of this manuscript as well as Lemma 3. A special thanks goes to the anonymous referees whose reports significantly improved the quality of this work.
\section*{Code Availability}
Source code supporting this paper is available from \url{https://doi.org/10.5281/zenodo.10124821} and has been updated for the revised version of this manuscript.
\appendix
\section{Proof of \autoref{lem:consistency}}
\begin{enumerate}
\item[(i)] The first part is immediately clear from the discussion above \autoref{lem:consistency} since 
$$L_{F_2}(V,\dv)=-(V^T\dv + \dv^T V) = -(W+W^T)\equiv -2S$$ is always symmetric, where $W=V^T\dv$ and $S=\frac{1}{2}(W+W^T)$ is the symmetric part of $W$.
\item[(ii)] Let $B\define \begin{bmatrix} Y\\-2S\end{bmatrix}$, where $Z\in\RR^{n,k}$ and $S\in\mathbb{S}_k$. We will show that the equation $L_F(X,\dx)=B$ always has a solution when $X$ is sufficiently close to $X^*$.\\
To start off, let $\dx_1\define\begin{bmatrix}V S\\0\end{bmatrix}$. Then we have from \eqref{eq:nepvcorrection} 
\begin{equation}
	L_F(X,\dx_1) = \begin{bmatrix} H(V)VS + L_H(V,VS)V - VS\Lambda \\ -(V^TVS + S^TV^TV)\end{bmatrix} \equiv\begin{bmatrix}Y_1 \\-2S\end{bmatrix}.	 
\end{equation}
Now consider the modified operator from \eqref{eq:nepvcorrectionmod}. This operator is obtained by replacing the orthogonality constraint $V^T V=I$ with the normalization constraint $C^TV=I$ w.r.t. an arbitrary matrix $C\in\RR^{n,k}$ as suggested in \cite{elias_implicit}. In our particular case, we choose $C=V$.\\  Since $L_{\widetilde{F}}(X^*)$ is locally invertible, the linear system 
\begin{equation}
	L_{\widetilde{F}}(X,\dx_2)=\begin{bmatrix}Y-Y_1\\0\end{bmatrix} \label{eq:proof1}
\end{equation}
always has a unique solution $\dx_2=(\dv_2,\dl_2)$. By construction of $L_{\widetilde{F}}(X)$, we need to have $V^T\dv_2=0$ to satisfy the bottom part of \eqref{eq:proof1}. In consequence,
$$ L_F(X,\dx_2)=L_{\widetilde{F}}(X,\dx_2)=\begin{bmatrix}Y-Y_1\\0\end{bmatrix}.$$
By linearity of the Fr\'echet derivative, we finally get 
\begin{equation}
 	L_F(X,\dx_1+\dx_2) = L_F(X,\dx_1) + L_F(X,\dx_2) = \begin{bmatrix}Y_1\\-2S	\end{bmatrix} + \begin{bmatrix}Y - Y_1\\0\end{bmatrix} =B
\end{equation}
and $\dx\define \dx_1+\dx_2$ is a solution of \eqref{eq:nepvcorrection}.
\end{enumerate}
\section{Global LSQR and derivation of the adjoint Fréchet derivative} 
\begin{algorithm}[H]
		\SetAlgoVlined
	\KwIn{$B\in\RR^{m,p}$, $\mathcal{A}:\RR^{n,k}\rightarrow\RR^{m,p}$, $\mathcal{A}^*:\RR^{m,p}\rightarrow\RR^{n,k}$}
	\KwOut{$X^*\in\RR^{n,k}$ approximate least-squares solution to $\min_{X\in\RR^{n,k}}\lVert \mathcal{A}(X)-B\rVert_F$}
	Set $X_0=0_{n,k}$\\
	Compute $\beta_1=\lVert B\rVert_F$ and normalize $U_1=\frac{B}{\beta_1}$\\
	Compute $V=\mathcal{A}^*(U_1)$ and $\alpha_1=\lVert V\rVert_F$, normalize $V_1=\frac{V}{\alpha_1}$\\
	Set $W_1=V_1$,\quad $\overline{\phi}_1=\beta_1$,\quad $\overline{\rho}_1=\alpha_1$\\
	\For{$i=1,2,\dots,\ell$}{
		Compute $U=\mathcal{A}(V_i)-\alpha_i U_i$ and $\beta_{i+1}=\lVert U\rVert_F$, normalize $U_{i+1}=\frac{U}{\beta_{i+1}}$\\
		Compute $V=\mathcal{A}^*(U_{i+1})-\beta_{i+1}V_i$ and $\alpha_{i+1}=\lVert V\rvert_F$, normalize $V_{i+1}=\frac{V}{\alpha_{i+1}}$\\
		Compute Givens rotations $\rho_i=\sqrt{\overline{\rho}_i^2 + \beta_{i+1}^2}$,\quad $c_i=\frac{\overline{\rho}_i}{\rho_i}$,\quad $s_i=\frac{\beta_{i+1}}{\rho_i}$\\
		$\theta_{i+1}=s_i\alpha_{i+1}$,\quad $\overline{\rho}_{i+1}=c_i\alpha_{i+1}$,\quad $\phi_i = c_i\overline{\phi}_i$,\quad $\overline{\phi}_{i+1}=-s_i\overline{\phi}_i$\\
		Update $X_{i}=X_{i-1} + \frac{\phi_i}{\rho_i} W_i$ and $W_{i+1}=V_{i+1} - \frac{\theta_{i+1}}{\rho_i}W_i$\\		
		\If{$\lvert \overline{\phi}_{i+1}\rvert<\text{tol}$}{
			\Return $X^*=X_i$
		}
	}
\vspace{-.2cm} 
	\caption{Global LSQR for the linear operator equation $\mathcal{A}(X)=B$}
	\label[algorithm]{alg:gllsqr}
\end{algorithm}
As mentioned earlier, using \autoref{alg:gllsqr} to solve \eqref{eq:nepvcorrection} requires the evaluation of the adjoint $L_F^*(X)$ of the Fréchet derivative of $F(X)$ to extend the Golub-Kahan-bidiagonalization. In this section, we will derive the formula for $L_F^*(X)$ in the case of GNEPv stated in \eqref{eq:adjointgnepv}, the case of NEPv \eqref{eq:adjointnepv} then directly follows by setting $G(V)=I_n$.\\
For GNEPv, we have 
\begin{eqnarray*}
L_F(X,\dx) &=& \begin{bmatrix}H(V)\dv + L_H(V)[\dv]V - G(V)\left(V\dl + \dv\Lambda\right) - L_G(V)[\dv]V\Lambda\\-(V^T\dv+\dv^TV \end{bmatrix}\\
		   &=& \underbrace{\begin{bmatrix} H(V) & -G(V)V \\ -V^T & 0  \end{bmatrix}}_{=:A(X)}\dx + \underbrace{\begin{bmatrix}L_H(V)[\dv] - G(V)\dv\Lambda - L_G(V)[\dv]V\Lambda\\-\dv^TV \end{bmatrix}}_{=:C_X(\dx)}
\end{eqnarray*}
Defining $S_X(\dx):=A(X)\dx$, it is immediate to see that $L_F^*(X,\dx) = S_X^*(\dx) + C_X^*(\dx)$ and, since $A(X)$ is not depending on $\dx$, $S_X^*(\dx)=A(X)^T\dx$. Thus, we just have to find the adjoint $C_X^*$ of $C_X$, which we will now do: Let $\dx_1,\dx_2\in\RR^{n+k,k}$. Then, we have 
\begin{eqnarray*}
\langle \dx_1,C_X(\dx_2)\rangle_F &=& \trace\left(\dx_1^T \begin{bmatrix}L_H(V)[\dv_2]V - G(V)\dv_2\Lambda - L_G(V)[\dv_2]V\Lambda\\-\dv_2^TV \end{bmatrix}\right)\\
							&=& \trace\left(\dv_1^TL_H(V)[\dv_2]V - \dl_1^T\dv_2^TV\right)\\
							&{}& -\trace\left(\dv_1^TG(V)\dv_2\Lambda + \dv_1^TL_G(V)[\dv_2]V\Lambda\right)\\
							&=&\langle\dv_1V^T, L_H(V,\dv_2)\rangle_F - \langle \dv_1\Lambda^TV^T,L_G(V)[\dv_2]\rangle_F\\
							&{}& -\langle V\dl_1^T+G(V)\dv_1\Lambda^T,\dv_2\rangle_F \\
							&=& \resizebox{.72\textwidth}{!}{$\left\langle \begin{bmatrix}L_H^*(V)[\dv_1V^T] -\left(V\dl_1^T+G(V)\dv_1\Lambda^T+L_G^*(V)[\dv_1\Lambda V^T]\right)\\0\end{bmatrix},\dx_2\right\rangle_F$}\\
							&\equiv& \langle C_X^*(\dx_1),\dx_2\rangle_F.							
\end{eqnarray*} 
Putting together $C_X^*(\dx)$ and $S_X^*(\dx)$, we get the adjoint
\begin{equation*}
\resizebox{\textwidth}{!}{$L_F^*(X,\dx)=\begin{bmatrix}H(V)\dv +L_H^*(V)[\dv V^T]- \left(V(\dl+\dl^T)+G(V)\dv\Lambda^T+L_G^*(V)[\dv\Lambda V^T]\right)\\-V^TG(V)\dv\end{bmatrix}$}.
\end{equation*}
\bibliographystyle{plain}

\end{document}